\newcommand{\Z}{\mathbb{Z}}
\newcommand{\Q}{\mathbb{Q}}
\newcommand{\R}{\mathbb{R}}
\newcommand{\C}{\mathbb{C}}
\newcommand{\N}{\mathbb{N}}
\newtheorem{theorem}{Theorem}
\newtheorem{lemma}{Lemma}
\newtheorem{corollary}{Corollary}
\newtheorem{proposition}{Proposition}
\theoremstyle{remark}
\begin{document}
\title[Characterization of the Fibonacci sequence]{On Arithmetic Progressions in Recurrences -\\ A new characterization of the Fibonacci sequence}
\subjclass{} \keywords{}

\author[\'{A}. Pint\'{e}r]{\'{A}kos Pint\'{e}r}
\thanks{ Research supported in part by the Hungarian Academy of
Sciences, OTKA grants T67580, K75566, and by the J\'anos Bolyai
Fellowship (\'A.P.), the Ervin Schr\"odinger Fellowship (V.Z.) and
the Foundation of Action Austria-Hungary, No. 75\"ou1}

\address{\'{A}. Pint\'{e}r \newline
         \indent Institute of Mathematics \newline
          \indent University of Debrecen \newline
         \indent H-4010 Debrecen, Hungary}
\email{apinter\char'100math.unideb.hu}

\author[V. Ziegler]{Volker Ziegler}
\address{V. Ziegler \newline
         \indent Institute for Analysis and\newline \indent Computational Number Theory,\newline
          \indent Graz University of Technology \newline
         \indent Steyrergasse 30/IV, \newline
         \indent A-8010 Graz, Austria}
\email{ziegler\char'100finanz.math.tugraz.at}

\begin{abstract}
We show that essentially the Fibonacci sequence is the unique
binary recurrence which contains infinitely many three-term
arithmetic progressions. A criterion for general linear
recurrences having infinitely many three-term arithmetic
progressions is also given.
\end{abstract}

\maketitle

\section{Introduction}
Linear recurrence sequences have been studied since ancient times
and also in the last century the interest in recurrences was high.
Equations concerning linear recurrences have an extremely rich
literature. For instance, let $f_n$ be a recurrence sequence, then
the equation $f_n=0$ has been studied by several mathematicians.
The finiteness of zero-multiplicity of $f_n$ was proved by Skolem,
Mahler and Lech \cite{Skolem:1934,Mahler:1935,Lech:1953}. Although
one can give upper bounds for the number of solutions (see
\cite{Schmidt:1999}) in general, it is not possible to find all
solutions effectively. However, in the case of binary and ternary
recursions Mignotte \cite{Mignotte:1975} found effective growth
estimates and therefore in these cases we can give all values of
$n$ with $f_n=0$. But also equations of the type $Af_n=Bg_m$ were
studied by various authors (e.g. Schlickewei and Schmidt
\cite{Schlickewei:2000}). For a full account to study similar
linear equations in recurrence sequences we recommend the book of
Everest et. al. \cite{Everest:RS}.

Let $K$ be an algebraically closed field of characteristic zero,
$\Gamma$ a multiplicative subgroup of $K^*$ having finite rank
$r$, let $\mathcal A$ be a finite set of $t$-tuples $\in K^t$
having $n$ elements and put
\[H_t(\Gamma,\mathcal A)=\left\{ \sum_{i=1}^t a_i x_i \: :\: (a_1,\ldots,a_t)\in\mathcal A,
(x_1,\ldots,x_t)\in \Gamma^t\right\}.\] Hajdu \cite{Hajdu:2007}
proved that there exists a constant $C(r,t,n)$ such that there
exists no non-constant arithmetic progression in
$H_t(\Gamma,\mathcal A)$ with length $\geq C(r,t,n)$. A direct
consequence of this result is that the length of arithmetic
progressions in simple recurrence sequences is bounded by a
constant depending only on its order $d$. Recently, the interest
in arithmetic progressions in certain number-theoretical
structures, like the points on elliptic curves
\cite{Bremner:2000}, solutions of Pellian equations
\cite{Pethoe:2008, Dujella:2008} or norm form equations
\cite{Berczes:2008}, has increased. We also mention here a new
result due to by Schwartz, Solymosi and de Zeeuw
\cite{Schwartz:2009}.

The purpose of this paper is to connect to these investigations.
Roughly speaking we show that sequences that contain
infinitely many three-term arithmetic progressions are very
special. Note that finding non-trivial three term arithmetic
progressions $(f_m,f_n,f_k)$ is equivalent to solve the equation
\begin{equation}\label{Eq:Arith} f_m+f_k=2f_n.\end{equation}
Equations of the type $Af_m+Bf_n+Cf_k=0$ have been studied by Schlickewei and Schmidt \cite{Schlickewei:1993}. Before stating our results we introduce some notions.

A linear recurrence $f_n$ of order $d$ is a complex sequence satisfying the recurrence
\[f_{n+d}=a_{d-1}f_{n+d-1}+\cdots+a_0f_n\]
with $a_i\in\C$ for $i=0,\ldots,d-1$, $a_0\not=0$ and the sequence does not satisfy such an equation with fewer summands. The companion polynomial $P$ is defined by
\[P(X)=X^d-a_{d-1}X^{d-1}-\cdots-a_0.\]
A linear recurrence is simple if its companion polynomial $P$ has
simple zeros only, and it is called non-degenerate if
$\alpha_i/\alpha_j$ is not a root of unity for any distinct zeros
$\alpha_i$ and $\alpha_j$ of $P$. Further, a recurrence is called
a unitary sequence if its companion polynomial possesses at least
one zero which is a root of unity.

Let $\alpha_1,\ldots,\alpha_r$ be the zeros of the companion
polynomial $P$ and assume that $\alpha_i$ is a zero of
multiplicity $\sigma_i$. Then we can write
\[f_n=\sum_{i=1}^r p_i(n)\alpha_i^n,\]
where $p_i(n)$ are polynomials of degree $<\sigma_i$.

Using the above cited result by Schlickewei and Schmidt we prove
the following

\begin{theorem}\label{Th:nondom}
Let $f_n$ be a non-degenerate and non-unitary recurrence with
companion polynomial $P$. Then there is a finite set
$S_0\subset\N^3$ such that all three-term arithmetic progressions
$(f_m,f_n,f_k)$ with $f_n\not=0$ satisfy $(m,n,k)\in S_0$
(isolated solutions) or one of the following three cases occurs:
\begin{itemize}
\item All but finitely many solutions to \eqref{Eq:Arith} are of
the form $m=k+a,n=k+b$, with $a,b\in\Z$ and
$P(X)|(X^a-2X^b+1)X^{-\min\{a,b,0\}}$. \item The recursion is of
the form
\begin{align}\label{Eq:THSC1}
\begin{split}
f_n=&\sum_{i=1}^r
c_i\left(\alpha_{2i-1}^n+\alpha_{2i}^n\frac{\alpha_{2i-1}^{a+c}+\alpha_{2i-1}^{b+c}}2 \zeta_i^c\right), \;\; \text{with}\\
0=&(\zeta_i^a+\zeta_i^b-4\zeta_i^c)+\zeta_i^a
\alpha_j^{b-a}+\zeta_i^b \alpha_j^{a-b}\,\,\text{or}\qquad
\end{split}\\\label{Eq:THSC2}
\begin{split}
f_n=&\sum_{i=1}^r
c_i\left(\alpha_{2i-1}^n+\alpha_{2i}^n(\alpha_{2i-1}^{a+c}+2\alpha_{2i-1}^{b+c}) \zeta_i^c\right), \;\; \text{with}\\
0=&(\zeta_i^a+4\zeta_i^b-\zeta_i^c)-2\zeta_i^a
\alpha_j^{b-a}-2\zeta_i^b \alpha_j^{a-b}\,\,\text{or}
\end{split}\\\label{Eq:THSC3}
\begin{split}
f_n=&\sum_{i=1}^r
c_i\left(\alpha_{2i-1}^n+\alpha_{2i}^n(2\alpha_{2i-1}^{a+c}+\alpha_{2i-1}^{b+c}) \zeta_i^c\right),\;\; \text{with}\\
0=&(4\zeta_i^a+\zeta_i^b-\zeta_i^c)-2\zeta_i^a
\alpha_j^{b-a}-2\zeta_i^b \alpha_j^{a-b},
\end{split}
\end{align}
where $j=2i-1,2i$, $c_i\in\C$, $\alpha_{2i-1}\alpha_{2i}=\zeta_i$ is an $M$-th root of unity with $M$
minimal for all $i=1,\ldots,r/2$.
Then according to \eqref{Eq:THSC1}, \eqref{Eq:THSC2} or
\eqref{Eq:THSC3} $(f_m,f_k,f_n)$ or $(f_k,f_n,f_m)$ or $(f_n,f_m,f_k)$ with $m=Mt+a,n=Mt+b,k=-Mt+c$ are
arithmetic progressions for all integers $t$.
\item The recursion is of the form
\[f_n=C(n-\gamma)2^{n/K}\zeta_K^n\]
where $\zeta_K$ is a $K$-th root of unity, with $\gamma,K\in\Z$
and $C\in\C$. Then $f_n,f_m$ and $f_k$ form an arithmetic
progression (arranged in some order) if $n=c2^s+\gamma,
m=c2^s+as+b, k=c2^s+a's+b'$ with $a,a',b,b',c$ integers for all
integers $s\geq 0$. Moreover $K$ and $c$ cannot be both positive.
\end{itemize}
\end{theorem}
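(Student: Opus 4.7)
The plan is to convert \eqref{Eq:Arith} into a polynomial--exponential equation in three variables and then invoke the Schlickewei--Schmidt structure theorem \cite{Schlickewei:1993}. Writing $f_n=\sum_{i=1}^r p_i(n)\alpha_i^n$, equation \eqref{Eq:Arith} becomes
\[
\sum_{i=1}^r p_i(m)\alpha_i^m+\sum_{i=1}^r p_i(k)\alpha_i^k-2\sum_{i=1}^r p_i(n)\alpha_i^n=0,
\]
a $3r$-term unit-type identity in the triple $(m,n,k)$. By the structure theorem, apart from a finite exceptional set (which will be the $S_0$ of the statement), every solution lies on an algebraic family indexed by a non-trivial partition of these $3r$ terms into subsums that vanish identically along the family.

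Next I would reduce to vanishing subsums of size at least three. A single term $p_i(\cdot)\alpha_i^{(\cdot)}$ is never identically zero. A two-term subsum $p_i(m)\alpha_i^m+c\,p_j(k)\alpha_j^k\equiv 0$ on an infinite family forces, by comparing moduli and using the non-degeneracy and non-unitarity hypotheses, that $i=j$ and $m-k$ is constant; such subsums then merge into the three-term analysis below. So it suffices to inspect the possible three-term (or longer) vanishing patterns.

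The three listed families correspond to the three structurally different possibilities. First, if every vanishing subsum consists of the three terms coming from a single root $\alpha_i$, the identity $p_i(m)\alpha_i^m+p_i(k)\alpha_i^k=2p_i(n)\alpha_i^n$ on an infinite family forces, since $\alpha_i$ is not a root of unity, the shifts $m-n=a-b$ and $k-n=-b$ to be constants; enforcing the same $(a,b)$ for every root of $P$ yields $\alpha^a-2\alpha^b+1=0$ for all roots of $P$, which is Case~1. Second, if a vanishing subsum mixes two distinct roots, the non-degeneracy and non-unitarity assumptions leave as the only option a pairing $\alpha_{2i-1}\alpha_{2i}=\zeta_i$ with $\zeta_i$ a root of unity, together with a one-parameter progression $m=Mt+a$, $n=Mt+b$, $k=-Mt+c$; the three choices of which of $f_m,f_k,f_n$ plays the middle role produce exactly the three shapes \eqref{Eq:THSC1}, \eqref{Eq:THSC2}, \eqref{Eq:THSC3}, and the auxiliary scalar constraints displayed there are obtained by matching the coefficients of $\alpha_{2i-1}^{Mt}$ and $\alpha_{2i}^{Mt}$ in the subsum. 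Third, the possibility that some $p_i$ is non-constant, when pushed through the same combinatorial analysis, forces the very rigid single-root shape $f_n=C(n-\gamma)2^{n/K}\zeta_K^n$ of Case~3; the prescribed triples $n=c2^s+\gamma$, $m=c2^s+as+b$, $k=c2^s+a's+b'$ are then checked to satisfy \eqref{Eq:Arith} by direct substitution, and the obstruction that $K$ and $c$ cannot both be positive is read off from a growth comparison.

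The principal obstacle I anticipate is the combinatorial enumeration in the middle step: \emph{a priori} there are many partitions of the $3r$ expanded terms into vanishing subsums, and each one must either be excluded using the non-degeneracy and non-unitarity hypotheses or else identified with one of the three families above. Tracking the three orderings of the arithmetic progression in parallel, ensuring that the pairing $\alpha_{2i-1}\alpha_{2i}=\zeta_i$ is uniform in $i$, and treating non-constant polynomial factors $p_i$ separately, are the places where the genuinely delicate case work lies.
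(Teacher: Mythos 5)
Your overall strategy coincides with the paper's: both reduce \eqref{Eq:Arith} to the equation $Af_n+Bf_m+Cf_k=0$ and invoke the Schlickewei--Schmidt structure theory, the only cosmetic difference being that the paper quotes the already-packaged trichotomy (Propositions \ref{Prop:nor}, \ref{Prop:sym}, \ref{Prop:exc}: translation families, symmetric families $m=Mt+a$, $n=Mt+b$, $k=-Mt+c$, and exceptional families $c_jN^s+\dots$) rather than re-deriving it from vanishing-subsum combinatorics. However, as a proof your text has genuine gaps. The most serious is your claim that a non-constant $p_i$, ``when pushed through the same combinatorial analysis, forces'' the exceptional shape of Case~3. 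That is not how the obstruction works: inside a translation family $m=k+a$, $n=k+b$ the structure theorem is perfectly compatible a priori with $\deg p_i=d>0$, and the paper eliminates this possibility by comparing the coefficients of $k^d$ and $k^{d-1}$ in the polynomial identity \eqref{Eq:Nor3}, which yields $\alpha^a=-bBC$ and $\alpha^b=aAC$ and hence a Diophantine equation of the shape $a^a=(2b)^b$ (or $(2b)^b=(\pm 2a)^a$); a separate $p$-adic valuation lemma is then needed to show these have no positive integer solutions. Without this step Case~1 as stated (in particular that $P$ itself, with multiplicities, divides the trinomial) is not established, and an analogous degree argument is needed in the symmetric case to force $d=0$ there.

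Two further omissions: first, the hypothesis is only $f_n\neq 0$, so the subcase $f_mf_k=0$, which degenerates \eqref{Eq:Arith} to the two-term equation $2f_n=f_k$ (or $2f_n=f_m$), must be handled separately via the two-term structure result; the paper devotes an argument to this in each of the three cases and it is absent from your plan. Second, your reduction ``to vanishing subsums of size at least three'' is justified incorrectly: a two-term subsum $p_i(m)\alpha_i^m+cp_j(k)\alpha_j^k\equiv 0$ with $i\neq j$ does \emph{not} force $i=j$ --- along a family with $m$ increasing and $k$ decreasing it only forces $(\alpha_i\alpha_j)^M=1$, which is exactly the symmetric situation; the actual reason such a pair cannot occur in a partition of the $3r$ terms is that it would strand a single non-vanishing term of the same growth rate. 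Finally, the concrete content of the exceptional case (why $N$ can be taken to be $2$, why $a''=0$ and $b''=\gamma$, the integrality of $a,a',b,b',c$, and why $K$ and $c$ cannot both be positive) is a multi-step analysis of dominant exponents that cannot be ``read off from a growth comparison'' in one line. In short: right framework, but the arithmetic lemmas and the zero subcase that carry the proof are missing.
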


We exclude the case $(f_m,0,f_k)$ since this leads to the equation $f_m=-f_k$ which is not
an essential restriction for so called symmetric recurrences. In order to
keep Theorem \ref{Th:nondom} as short as possible (which is not an easy task) we made this technical restriction.
Note that the other cases are essential restrictions for the recurrences.
Therefore excluding this case recurrences which admit infinitely
many three-term arithmetic progressions are in some way very special.
How special they are can be seen in Corollary \ref{Cor:Fib}.
However, it is no problem to include conditions under which $(f_m,0,f_k)$ is an
arithmetic progression.

Also, remark that we can bound the number of isolated solutions
$|S_0|$ but we cannot give an upper estimate for the ``maximum''
of these solutions. The reason lies in the use of the quantitative
version of the subspace theorem. We want to point out here that in
many important cases we can compute $S_0$ effectively. At least
this can be done for all binary and ternary recurrences (for
techniques to do see \cite{Mignotte:1975}).

If we restrict ourselves to recurrences defined over the integers, i.e. $f_n\in\Z$ for all $n\in \Z$, and consider only positive indices we obtain.

\begin{corollary}\label{Cor:Int}
Let $f_n$ be non-degenerate, non-unitary and be defined over the
integers. Moreover, assume $f_n$ provides infinitely many
three-term arithmetic progressions $(f_m,f_n,f_k)$ with $n,m,k>0$.
Then the companion polynomial $P(X)$ is one of the factors of
$\frac{X^a-2X^b+1}{X^d-1}$ with $a>b>0$ and $d=\gcd(a,b)$.
\end{corollary}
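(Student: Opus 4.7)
The plan is to apply Theorem~\ref{Th:nondom} and discard every alternative except the one that matches the target form. The hypothesis $f_n\in\Z$ for every $n\in\Z$ forces the companion polynomial $P\in\Z[X]$ to be monic with constant term $\pm 1$, so each root $\alpha$ of $P$ is an algebraic integer unit; this is the leverage I will use via Kronecker's theorem. In the third bullet of Theorem~\ref{Th:nondom} the recurrence is $f_n=C(n-\gamma)2^{n/K}\zeta_K^n$; integrality forces $K=1$ and $\zeta_K=1$, after which the clause that $K$ and $c$ cannot both be positive, together with the requirement $n=c\,2^s+\gamma>0$, leaves only finitely many admissible $s$. In the second bullet the APs are $(Mt+a,Mt+b,-Mt+c)$ with $M\ge 1$, and positivity of all three indices confines $t$ to a bounded interval. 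Only the first bullet can therefore supply infinitely many APs with positive indices, and we obtain $P(X)\mid(X^a-2X^b+1)X^{-\min\{a,b,0\}}$ for some $a,b\in\Z$.

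Next I normalise $(a,b)$. The symmetry of \eqref{Eq:Arith} in $m$ and $k$ converts $(a,b)$ to $(-a,b-a)$, so I may assume $a>0$; the cases $a=0$, $b=0$, and $a=b$ are excluded by non-unitarity, since in each of them the right-hand polynomial reduces to a multiple of $X^c-1$. In the sub-case $b>a>0$, every root $\alpha$ of $P$ satisfies $\alpha^a+1=2\alpha^b$; writing $r=|\sigma(\alpha)|$ for a complex embedding $\sigma$ one deduces $2r^b\le r^a+1$, which forces $r\le 1$. In the sub-case $a>0>b$, multiplying the relation by $\alpha^{-b}$ gives $\alpha^{a-b}+\alpha^{-b}=2$, so $r^{a-b}+r^{-b}\ge 2$ and hence $r\ge 1$. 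In either case, combining with the unit condition $\prod_\sigma|\sigma(\alpha)|=1$ forces $|\sigma(\alpha)|=1$ at every embedding, and Kronecker's theorem then makes $\alpha$ a root of unity, contradicting non-unitarity. Only $a>b>0$ with $P(X)\mid X^a-2X^b+1$ survives.

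Set $d=\gcd(a,b)$. Every $d$-th root of unity $\zeta$ satisfies $\zeta^a=\zeta^b=1$ and hence $\zeta^a-2\zeta^b+1=0$, so $X^d-1$ divides $X^a-2X^b+1$ in $\Z[X]$. Non-unitarity implies $\gcd(P,X^d-1)=1$; together with $P\mid X^a-2X^b+1$ this gives $P(X)\mid\frac{X^a-2X^b+1}{X^d-1}$, the desired form. The hardest step is the Kronecker argument in the previous paragraph: the absolute-value inequality derived from the polynomial relation must be promoted to equality at every complex embedding simultaneously, and this promotion is exactly what the unit property of $\alpha$, coming from global $\Z$-integrality of $f_n$, provides.
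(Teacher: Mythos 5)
Your argument is correct and reaches the stated conclusion, but it replaces the paper's decisive tool by a different one. Both proofs share the same skeleton: invoke Theorem \ref{Th:nondom}, discard the symmetric bullet because $k=-Mt+c$ cannot remain positive for infinitely many $t$, discard the exceptional bullet because positivity of the indices is incompatible with the sign constraints on $K$ and $c$, and then analyse the surviving trinomial divisibility. The divergence is in how the unwanted shapes (your cases $b>a>0$ and $a>0>b$, i.e.\ the paper's $2X^a-X^b-1$ and $X^a+X^b-2$) are eliminated. The paper appeals to Lemma \ref{Lem:irr}: the non-cyclotomic part of $X^n+X^m-2$ is irreducible, so $P$ would have to be that entire factor, whose constant (resp.\ leading) coefficient $\pm 2$ is incompatible with $f_n\in\Z$ as $n\to-\infty$ (resp.\ $n\to+\infty$). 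You instead extract from global integrality that every root of $P$ is an algebraic unit, combine the one-sided modulus bounds $|\sigma(\alpha)|\le 1$ (resp.\ $\ge 1$) with $\prod_\sigma|\sigma(\alpha)|=1$ to force all conjugates onto the unit circle, and conclude by Kronecker. This is a genuinely different and in some respects cleaner route: it bypasses the irreducibility lemma entirely (the modulus estimate you use is the same one that drives the paper's proof of Lemma \ref{Lem:irr}, but you never need irreducibility). What it costs you is the opening assertion that $f_n\in\Z$ for all $n\in\Z$ forces $P$ to be monic over $\Z$ with constant term $\pm1$: that is true, but it is not immediate from the hypotheses --- it requires the Fatou property of $\Z$ (a rational power series with integer coefficients has, in lowest terms, a denominator with constant term $1$), applied once to $(f_n)_{n\ge0}$ and once to $(f_{-n})_{n\ge0}$; state and cite this, since as written it is the one unproved load-bearing step. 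Two minor remarks: $\zeta_K=1$ is not actually forced in the exceptional case (only $\zeta_K=\pm1$), though this is harmless; and once the unit property is in hand the exceptional bullet dies instantly, since $\pm 2^{\pm1}$ is not a unit.
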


Note that the factorization of trinomials has been extensively
studied by Schinzel (see e.g. his book \cite{Schinzel:Pol}). In
particular the precise factorization of the polynomial
$X^a-2X^b+1$ for $a>b>0$ is known (see \cite{Schinzel:1962}).
Schinzel used the factorization of $X^a-2X^b+1$ to prove that
there exist no non-trivial four-term arithmetic progression in
sequences of the form $f_n=q^n$ and $q$ an irrational number (a
question due to Sierpi\'{n}ski \cite{Sierpinski:1954}). In Lemma
\ref{Lem:irr} we will give the factorization of $X^a+X^b-2$. These
results on the factorization of trinomials are crucial in the
proof of the following theorem on the binary recurrence case.

\begin{theorem}\label{Th:Bin}
Let $f_n$ be a non-degenerate and non-unitary binary recurrence,
which is defined over the rationals and contains infinitely many
three-term arithmetic progressions. Then $f_n$ fulfills one of the
following conditions
\begin{itemize}
\item The binary recurrence $f_n$ is of the form $f_n=R(n-\gamma)2^{\pm n}$, with $R\in\Q^*$ and $\gamma\in\Z$. Such recurrences admit arithmetic three term progressions $(f_m,f_n,f_k)$ with
\[m=\mp 2^{s\mp\gamma}\pm s, \quad n=\mp 2^{s\mp\gamma}\pm s\mp 1, \quad k=\mp 2^{s\mp\gamma}+\gamma\]
for all $s>\pm \gamma+\frac{1\pm 1}2$.
\item The sequence is listed in Table \ref{Tab:Sym} (up to a multiplication by a rational) and $f_m,f_n$ and $f_k$ form a three-term arithmetic progression (in some order) with $m=2t+a,n=2t+b$ and $k=-2t+c$ for all $t\in\Z$.
\item The companion polynomial of the recurrence $f_n$ is listed in Table \ref{Tab:Bin}.
\end{itemize}
\end{theorem}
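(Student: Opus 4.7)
The plan is to apply Theorem \ref{Th:nondom} to our binary recurrence $f_n$ and specialize each of its four alternatives to the case $d = 2$ over $\Q$. The hypothesis that $f_n$ admits infinitely many three-term progressions rules out the isolated-solutions case, so exactly one of the remaining three alternatives must occur, and I treat them in turn.

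The polynomial-times-exponential alternative is the quickest: if $f_n = C(n-\gamma)2^{n/K}\zeta_K^n$, then the companion polynomial is $(X - 2^{1/K}\zeta_K)^2$ and its coefficient $2\cdot 2^{1/K}\zeta_K$ must be rational. Since $|2^{1/K}\zeta_K| = 2^{1/K}$ is rational only for $K = \pm 1$, and the value is then real, we must have $\zeta_K = 1$; this forces $f_n = R(n-\gamma)2^{\pm n}$ with $R\in\Q^*$. The explicit AP $(m,n,k)$ in the first bullet of Theorem \ref{Th:Bin} is then obtained by substituting $K = \pm 1$ into the parametrization $n = c2^s+\gamma$, $m = c2^s+as+b$, $k = c2^s+a's+b'$ of Theorem \ref{Th:nondom}, subject to the sign constraint that $c$ and $K$ are not both positive.

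For the divisibility alternative I exploit the fact that non-degeneracy forces $0$ not to be a root of $P$, so after clearing the factor $X^{-\min\{a,b,0\}}$ the condition reads: $P$ is a quadratic divisor of a trinomial of the form $X^A - 2X^B + 1$ (when $a,b$ are positive) or $X^A + X^B - 2$ (for mixed signs, reduced to this shape by multiplying through by a power of $X$). The main tool here is Schinzel's explicit factorization of $X^A - 2X^B + 1$ \cite{Schinzel:1962}, together with the analogous statement for $X^A + X^B - 2$ supplied in Lemma \ref{Lem:irr}. Discarding every resulting quadratic factor that is unitary or that has $0$ as a root leaves a finite list, which is exactly Table \ref{Tab:Bin}.

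It remains to analyse the symmetric alternative. With $r/2 = 1$ in \eqref{Eq:THSC1}--\eqref{Eq:THSC3} there is only a single pair $\alpha_1,\alpha_2$ with product $\zeta = \alpha_1\alpha_2$ an $M$-th root of unity; rationality of $f_n$ forces $\alpha_1\alpha_2 \in \Q$, so $\zeta \in \{\pm 1\}$ and $M \in \{1,2\}$, while $\alpha_1,\alpha_2$ are either both rational or a quadratic conjugate pair. In each of \eqref{Eq:THSC1}--\eqref{Eq:THSC3} the defining equations then reduce to a small polynomial system in $\alpha_1$, $\zeta$ and the integer parameters $a,b,c$, which I solve case-by-case; the admissible solutions, taken up to rational rescaling, are precisely the entries of Table \ref{Tab:Sym}, with AP indices $m = 2t+a$, $n = 2t+b$, $k = -2t+c$. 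The main obstacle I anticipate is precisely this final analysis: keeping the bookkeeping clean enough across the three subcases, the two possibilities for $\zeta$, and the permutations of $(f_m,f_n,f_k)$, to ensure that Table \ref{Tab:Sym} is exhaustive and does not overlap Table \ref{Tab:Bin}.
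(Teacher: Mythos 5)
Your overall strategy coincides with the paper's: specialize the three non-isolated alternatives of Theorem \ref{Th:nondom} to a binary recurrence over $\Q$, settle the exceptional case via the rationality of $2^{1/K}\zeta_K$, and enumerate quadratic divisors of the trinomials via Schinzel's factorization (Lemma \ref{Lem:Schinzel}) and Lemma \ref{Lem:irr}. Those two parts of your plan would go through essentially as in the paper, with two quibbles: the two trinomial shapes arise from which of $A,B,C$ equals $-2$ in $Af_n+Bf_m+Cf_k=0$ at least as much as from the signs of $a,b$; and your filter ``unitary or has $0$ as a root'' does not discard the factor $X^2-2$ of $X^4+X^2-2$, which is degenerate but neither unitary nor vanishing at $0$ --- harmless only because non-degeneracy is a standing hypothesis.

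The genuine gap is in the symmetric case. The defining equations \eqref{Eq:THSC1}--\eqref{Eq:THSC3} only force $\alpha_1^{a-b}$ to be a root of an explicit quadratic with unit constant term; solving that ``small polynomial system'' yields the candidate roots $\pm2\pm\sqrt5$, $(\pm1\pm\sqrt5)/2$, $\pm1\pm\sqrt2$, and the restriction $|a-b|\in\{1,3\}$, but it does \emph{not} produce the parity conditions $b+c\equiv 0$ or $1 \pmod 2$ that distinguish the rows of Table \ref{Tab:Sym}. Those come from a constraint your plan omits: one must verify that the leading scalar can be chosen so that the entire sequence takes rational values, which the paper does by normalizing $C_0$ to be a norm (so that $f_0\in\Q$ automatically) and then computing exactly when $f_1\in\Q$ via a trace/norm decomposition; it is this computation that eliminates half of the candidate $(\alpha,a,b,c)$ combinations. ``Up to rational rescaling'' does not capture this, because the rescaling needed is by an irrational algebraic number and for the discarded combinations no such rescaling exists. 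You also need a short argument ruling out $\zeta=\alpha_1\alpha_2=+1$: the first two equations of the relevant system \eqref{Eq:symd>0} then force $\alpha_1=\alpha_2=1$, contradicting non-degeneracy; the observation $M\in\{1,2\}$ alone does not dispose of this case.
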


\begin{table}[ht]
\caption{Sequences that contain infinitely many arithmetic
progressions involving $f_{2t+a},f_{2t+b}$ and $f_{-2t+c}$}
\label{Tab:Sym}
\begin{tabular}{|c|c|c|}
\hline
$f_n$ & $\alpha$ & $a,b,c$ \\\hline\hline
\multirow{4}*{\begin{tabular}{c} $f_n=C\left(\alpha^n+(-1)^{n+c}\alpha^{b+c-n}\frac{1+\alpha}2\right)$ \\
$C=1+\frac{(-1)^a\alpha^{-(a+c)}+(-1)^b\alpha^{-(b+c)}}2$ \end{tabular}} &
\multirow{2}*{$\alpha=2\pm \sqrt{5}$} & $a-b=1$ \\
& &  $b+c\equiv 0 \mod 2$ \\ \cline{2-3}
 & \multirow{2}*{$\alpha=-2\pm \sqrt{5}$} &  $a-b=1$ \\
& & $b+c\equiv 1 \mod 2$ \\ \hline
\multirow{4}*{\begin{tabular}{c} $f_n=C\left(\alpha^n+(-1)^{n+c}\alpha^{b+c-n}\frac{1+\alpha^3}2\right)$ \\
$C=1+\frac{(-1)^a\alpha^{-(a+c)}+(-1)^b\alpha^{-(b+c)}}2$ \end{tabular}} &
\multirow{2}*{$\alpha=\frac{1\pm\sqrt 5}2$} & $a-b=3$ \\
& &  $b+c\equiv 0 \mod 2$ \\\cline{2-3}
 & \multirow{2}*{$\alpha=\frac{-1\pm\sqrt 5}2$} & $a-b=3$ \\
& & $b+c\equiv 1 \mod 2$ \\ \hline
\multirow{4}*{\begin{tabular}{c} $f_n=C\left(\alpha^n+(-1)^{n+c+1}\alpha^{b+c-n}(2\alpha-1)\right)$ \\
$C=1+(-1)^b\alpha^{-(b+c)}-(-1)^a2\alpha^{-(a+c)}$\end{tabular}} &
\multirow{2}*{$\alpha=-1-\sqrt 2$} &  $a-b=1$ \\
& & $b+c\equiv 0 \mod 2$ \\ \cline{2-3}
 & \multirow{2}*{$\alpha=-\frac{1+\sqrt 5}2$} &  $a-b=1$ \\
& & $b+c\equiv 1 \mod 2$ \\ \hline
\multirow{4}*{\begin{tabular}{c} $f_n=C\left(\alpha^n+(-1)^{n+c}\alpha^{b+c-n}(\alpha-2)\right)$\\
$C=1+(-1)^a\alpha^{-(a+c)}-(-1)^b2\alpha^{-(b+c)}$ \end{tabular}} &
\multirow{2}*{$\alpha=-1-\sqrt 2$} &  $a-b=1$ \\
& & $b+c\equiv 1 \mod 2$ \\ \cline{2-3}
 & \multirow{2}*{$\alpha=-\frac{1+\sqrt 5}2$} &  $a-b=1$ \\
& & $b+c\equiv 0 \mod 2$ \\ \hline
\end{tabular}
\end{table}

\begin{table}[ht]
\caption{Companion polynomials of binary recursion containing
arithmetic progressions with $f_{n+a},f_{n+b}$ and $f_n$
involved.}\label{Tab:Bin}
\begin{tabular}{|c|c|c|}
\hline
$a$ & $b$ & $P(X)$ \\\hline\hline
\multirow{3}*{$3$} &\multirow{3}*{$1$} & $X^2+X-1$ \\ \cline{3-3}
 & & $X^2+X+2$ \\ \cline{3-3}
 & & $2X^2+2X+1$ \\ \hline
\multirow{3}*{$3$} & \multirow{3}*{$2$} & $X^2-X-1$ \\ \cline{3-3}
 & & $X^2+2X+2$ \\ \cline{3-3}
 & & $2X^2+X+1$  \\ \hline
\end{tabular}
\end{table}

Let us consider the Fibonacci sequence, i.e.
\[f_0=0, \quad f_1=1, \quad f_{n+2}=f_{n+1}+f_n, \;\;n\geq 0.\]
We obtain the following characterization of the Fibonacci sequence.

\begin{corollary}\label{Cor:Fib}
The only increasing, simple, non-degenerate and non-unitary
recursion $f_n$ defined over the rationals that contains
infinitely many three-term arithmetic progressions $(f_m,f_n,f_k)$
with $m,n,k\geq 0$, which additionally satisfies $f_0=0$ and
$f_1=1$ is the Fibonacci sequence.

Moreover, the Fibonacci sequence contains for $n\geq 0$ beside
the infinite family $(f_{n},f_{n+2},f_{n+3})$ of three-term arithmetic
progressions only the three-term arithmetic progressions
\begin{gather*}
(f_0=0,f_1=1,f_3=2), \;\; (f_0=0,f_2=1,f_3=2)\;\; \text{and}\;\;
(f_2=1,f_3=2,f_4=3).
\end{gather*}
The only four-term arithmetic progressions are
\[(f_0=0,f_1=1,f_3=2,f_4=3)\;\; \text{and}\;\; (f_0=0,f_2=1,f_3=2,f_4=3).\]
\end{corollary}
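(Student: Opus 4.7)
The plan is to invoke Theorem \ref{Th:Bin}. Its first alternative $f_n=R(n-\gamma)2^{\pm n}$ has a companion polynomial with a double root and is ruled out by simplicity. Its second alternative, the sequences of Table \ref{Tab:Sym}, admits only the symmetric AP family $(f_{2t+a},f_{2t+b},f_{-2t+c})$; the non-negativity constraint $m,n,k\geq 0$ restricts $t$ to a finite set, so this case cannot produce infinitely many progressions with non-negative indices. Consequently the companion polynomial must lie in Table \ref{Tab:Bin}. A case analysis of the six entries eliminates all but one: $X^2+X+2$, $2X^2+2X+1$, $X^2+2X+2$, and $2X^2+X+1$ have complex conjugate roots and generate oscillating sequences, while $X^2+X-1$ has negative dominant root $(-1-\sqrt{5})/2$ and yields $0,1,-1,2,-3,5,\ldots$. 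The only survivor is $X^2-X-1$, and with $f_0=0$, $f_1=1$ this is precisely the Fibonacci sequence.

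With $f_n=F_n$ identified, the infinite family $(F_n,F_{n+2},F_{n+3})$ of AP's corresponds to Case 1 of Theorem \ref{Th:nondom} with $a=3, b=2$, reflecting the polynomial identity $X^3-2X^2+1=(X-1)(X^2-X-1)$ and the arithmetic identity $F_{n+3}+F_n=2F_{n+2}$. All remaining solutions of $F_m+F_k=2F_n$ lie in the finite exceptional set $S_0$ supplied by Theorem \ref{Th:nondom}. For the binary recurrence $F_n$, $S_0$ can be made explicit by applying Mignotte's lower bounds for linear forms in two logarithms \cite{Mignotte:1975} to Binet's formula $F_n=(\phi^n-\psi^n)/\sqrt{5}$; this yields an effective bound on $\max(m,n,k)$, after which a finite search produces exactly the three isolated progressions $(F_0,F_1,F_3)$, $(F_0,F_2,F_3)$, and $(F_2,F_3,F_4)$.

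Four-term arithmetic progressions arise by chaining two overlapping three-term progressions. Extending a member of the infinite family $(F_n,F_{n+2},F_{n+3})$ to the right requires $F_{n+3}+F_{n+1}$ to be a Fibonacci number, i.e.\ $F_{n+2}=F_{n+1}$, which forces $n=0$ and gives the four-term progression $(F_0,F_2,F_3,F_4)=(0,1,2,3)$; chaining the isolated progression $(F_0,F_1,F_3)$ with $(F_1,F_3,F_4)$ gives $(F_0,F_1,F_3,F_4)=(0,1,2,3)$; no other overlap produces a valid four-term progression, since extending the family or the isolated progressions any further would require consecutive Fibonacci numbers to satisfy an identity they do not. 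The main technical hurdle is the effective enumeration of the isolated three-term progressions, where the finiteness from Theorem \ref{Th:nondom} is only qualitative (coming from the quantitative subspace theorem); the reduction to the binary Fibonacci recurrence and the classical machinery of linear forms in logarithms is what bridges this gap.
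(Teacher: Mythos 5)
Your identification of the Fibonacci sequence follows the paper's route: reduce via Theorem \ref{Th:Bin} to the companion polynomials of Table \ref{Tab:Bin} (the paper leaves the exclusion of the exceptional and symmetric alternatives implicit, whereas you spell it out via simplicity and the constraint $m,n,k\geq 0$ --- a worthwhile addition), then discard every entry except $X^2-X-1$ because its dominant root is either non-real or less than $1$, and invoke $f_0=0$, $f_1=1$. Where you genuinely diverge is the enumeration of the isolated three-term progressions. The paper does not touch linear forms in logarithms: it divides $F_m+F_k-2F_n=0$ in Binet form by $\bigl(\tfrac{1+\sqrt5}{2}\bigr)^k$ to get
\[
\left|2\left(\tfrac{1+\sqrt 5}2\right)^{n-k}-\left(\tfrac{1+\sqrt 5}2\right)^{m-k}-1\right|<4\left(\tfrac{3-\sqrt 5}2\right)^k,
\]
and then a short case analysis on $m-n\in\{1,2,\geq 3\}$ and on $k=0$ versus $k>0$ pins down $k\leq 2$ and $n-k\leq 3$ outright, yielding the complete finite list by inspection. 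Your route --- treat the isolated solutions as the set $S_0$ of Theorem \ref{Th:nondom} and make it effective via Mignotte's two-logarithm bounds --- is legitimate in principle (the paper itself remarks that $S_0$ is effectively computable for binary recurrences by exactly this machinery), but as written it is the one step of your argument that is asserted rather than executed: bounds from linear forms in logarithms are typically far too large for a bare ``finite search,'' so you would still need a reduction step (continued fractions or Baker--Davenport) before you could claim that exactly $(F_0,F_1,F_3)$, $(F_0,F_2,F_3)$, $(F_2,F_3,F_4)$ survive. The paper's elementary estimate buys a fully self-contained two-page verification; your approach buys generality but defers the only nontrivial computation. Your four-term argument (chaining overlapping three-term progressions) is essentially the paper's, which bounds the highest index by $5$ and checks $f_0,\ldots,f_5$ directly; your claim that extending the family rightward forces $F_{n+1}=F_{n+2}$ should more carefully say that for $n\geq 1$ one has $F_{n+3}<F_{n+3}+F_{n+1}<F_{n+4}$, so the required fourth term falls strictly between consecutive Fibonacci numbers, but the conclusion $n=0$ is correct.
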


The condition non-unitary is essentially since the sequence
$f_n=\frac{2^n-(-1)^n}3$ fulfills the same properties as required
in the corollary and contain the infinite family of arithmetic
three-term progressions $(f_{2t},f_{2t-1},f_1=f_2)$. However, by
simple growth estimates we can show that this sequence is the only
exception.

\begin{corollary}\label{Cor:unitary}
Omitting the condition non-unitary in Corollary \ref{Cor:Fib}, we
have $f_n=\frac{2^n-(-1)^n}3$ or $f_n$ is the Fibonacci sequence.
\end{corollary}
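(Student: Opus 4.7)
The plan is to treat the non-unitary and unitary cases separately. If $f_n$ is non-unitary, Corollary \ref{Cor:Fib} immediately gives the Fibonacci sequence; the whole work is therefore to show that a unitary $f_n$ satisfying the other hypotheses must coincide with $(2^n-(-1)^n)/3$.

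In the unitary case, I first use a structural observation. Non-degeneracy forbids two distinct roots of unity from appearing among the roots of the companion polynomial, since their ratio would itself be a root of unity; so there is exactly one such root, and since the companion polynomial has rational coefficients this root is Galois-fixed, hence rational, yielding $\zeta\in\{1,-1\}$. Writing
\[f_n=c_0\zeta^n+g_n,\]
where $g_n$ collects the remaining (non-root-of-unity) roots, one obtains a non-degenerate, non-unitary linear recurrence $g_n$ over $\Q$ with $c_0\neq 0$ and $g_n\to\infty$.

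The AP equation $f_m+f_k=2f_n$ then reads $g_m+g_k-2g_n=-c_0(\zeta^m+\zeta^k-2\zeta^n)$, whose right-hand side takes at most five values. Pigeonholing on residues modulo $2$ gives an infinite subfamily of APs along which $g_m+g_k-2g_n=D$ is constant. Let $\alpha$ denote a root of $g_n$ of maximal absolute value; the increasing nature of $f_n$ forces $\alpha$ real, $\alpha>1$. WLOG $m\geq n\geq k$ along the subsequence. Dividing by $\alpha^m$ and passing to the limit along a sub-subsequence where the index differences stabilise, I am driven to a relation $1+\alpha^v=2\alpha^u$, where either $u,v\leq -1$ are fixed integers (if $k$ is unbounded) or $\alpha^v$ is replaced by $0$ (if $k$ stays bounded). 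Non-degeneracy prevents $\alpha$ from being irrational, because the Galois conjugates of an irrational $\alpha$ satisfying $\alpha^u\in\Q$ would differ from $\alpha$ by a root of unity; and an elementary clearing-denominators argument (writing $\alpha=p/q$ with $\gcd(p,q)=1$ and reducing modulo $p$) shows $1+\alpha^v=2\alpha^u$ is unsolvable with $u,v\leq -1$ and $\alpha\in\Q$, $\alpha>1$. I am therefore forced into the bounded-$k$ case with $\alpha^u=1/2$, giving $\alpha=2$ and $m=n+1$.

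With $\alpha=2$ fixed, split $g_n=c_2\cdot 2^n+h_n$ where all remaining roots of $h_n$ have absolute value strictly below $2$. The AP equation becomes $h_{n+1}-2h_n=D-g_k$ for infinitely many $n$ in a residue class modulo $2$ and $k$ in a finite set; since $h_{n+1}-2h_n$ is again a non-degenerate, non-unitary linear recurrence, Skolem--Mahler--Lech forces it to take any single value only finitely often unless it vanishes identically, hence $h_n\equiv 0$. Thus $f_n=c_0\zeta^n+c_2\cdot 2^n$ is binary, and the initial conditions $f_0=0$, $f_1=1$ yield $f_n=2^n-1$ (if $\zeta=1$) or $f_n=(2^n-(-1)^n)/3$ (if $\zeta=-1$); the first is excluded because $2^m+2^k=2^{n+1}$ has only the trivial solution by unique factorization, leaving only the claimed exception. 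The main obstacle I anticipate is the rigorous identification $\alpha=2$: combining non-degeneracy, rationality, and the precise growth of $g_n$ to rule out all alternative dominant roots and all unbounded-$k$ configurations simultaneously.
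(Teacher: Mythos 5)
Your overall strategy (split off the unique root of unity $\zeta=\pm1$, pigeonhole the right-hand side $-c_0(\zeta^m+\zeta^k-2\zeta^n)$ down to a constant $D$, and analyse the dominant root $\alpha$ of the non-unitary part $g_n$) is coherent and, unlike the paper's own proof, does not presuppose that $f_n$ is binary. But there is a genuine gap at the decisive step. You dispose of the relation $1+\alpha^v=2\alpha^u$ by asserting that non-degeneracy forces $\alpha\in\Q$, ``because the Galois conjugates of an irrational $\alpha$ satisfying $\alpha^u\in\Q$ would differ from $\alpha$ by a root of unity.'' That argument is valid only in the branch $2\alpha^u=1$, where some power of $\alpha$ really is rational. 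In the other branch $\alpha$ merely satisfies the trinomial $X^{m-k}-2X^{n-k}+1=0$, and nothing forces any power of $\alpha$ to lie in $\Q$; irrational dominant roots of such trinomials exist in abundance --- for instance $\alpha=\frac{1+\sqrt 5}{2}$ is a root of $X^3-2X^2+1=(X-1)(X^2-X-1)$, and the real root of $X^3-X-1$ divides $X^7-2X^2+1$ by the paper's own Lemma \ref{Lem:Schinzel}. So the ``unbounded $k$'' branch is not closed by anything you wrote.

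The gap is not cosmetic. The sequence $f_n=F_{n+2}-1$, where $F_n$ is the Fibonacci sequence, has companion polynomial $(X-1)(X^2-X-1)$, hence is unitary with $\zeta=1$, $c_0=-1$ and $g_n=F_{n+2}$; it is simple, non-degenerate, increasing, defined over $\Q$, satisfies $f_0=0$ and $f_1=1$, and carries the infinite family of three-term arithmetic progressions $(f_k,f_{k+2},f_{k+3})$. It lives precisely in the branch your Galois argument fails to exclude, with $u=-1$, $v=-3$ and $1+\alpha^{-3}=2\alpha^{-1}$ for $\alpha=\frac{1+\sqrt 5}{2}$. This shows that the corollary can only hold under the tacit restriction to binary recurrences (which is exactly where the paper's proof starts, writing $f_n=ca^n+d(\pm1)^n$), and that your higher-order treatment cannot be repaired as written. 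Restricted to the binary case your argument does go through: there $\alpha$ is automatically rational, your clearing-denominators argument kills the trinomial branch, and your endgame ($\alpha=2$, $m=n+1$, then $2^m+2^k=2^{n+1}$ to exclude $\zeta=+1$) is correct. In that setting your route is genuinely different from, and more elementary than, the paper's, which excludes $\zeta=+1$ via Schinzel's factorization lemma and rules out $1<a<2$ by Baker's theory of linear forms in logarithms rather than by your pigeonhole-and-limit argument.
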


Although Theorem \ref{Th:Bin} is long and technical the case of
ternary sequences is much easier, since the so-called symmetric
and exceptional cases do not occur. Therefore we show

\begin{theorem}\label{Th:Ternary}
Let $f_n$ be a non-degenerate, non-unitary, ternary recurrence, which is defined over the rationals and contains infinitely many arithmetic progressions. Then $f_n$ has companion polynomial listed in Table \ref{Tab:Ter}.
\end{theorem}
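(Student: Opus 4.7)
The plan is to apply Theorem~\ref{Th:nondom} to $f_n$ and discard the alternatives that cannot occur for an order-$3$, rational, non-degenerate and non-unitary recurrence. The only surviving alternative will be the divisibility statement $P(X)\mid(X^a-2X^b+1)X^{-\min\{a,b,0\}}$, and the polynomials listed in Table~\ref{Tab:Ter} will then be extracted from Schinzel's explicit factorization of $X^a-2X^b+1$ \cite{Schinzel:1962,Schinzel:Pol}.

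I would first rule out the symmetric families \eqref{Eq:THSC1}--\eqref{Eq:THSC3}. Each of these recurrences is written as a sum over pairs of characteristic roots $(\alpha_{2i-1},\alpha_{2i})$ whose product is a root of unity $\zeta_i$, so the set of non-zero roots of $P$ has even cardinality; this contradicts $\deg P=3$. I would then eliminate the exceptional alternative $f_n=C(n-\gamma)2^{n/K}\zeta_K^n$. Here $f_n$ has a single algebraic frequency $\lambda=2^{1/K}\zeta_K$ of multiplicity $2$: if $\lambda\in\Q$ then $K=\pm 1$ and the minimal recurrence over $\Q$ has order $2$, while if $\lambda\notin\Q$ the requirement that $f_n\in\Q$ for all $n$ forces us to adjoin all Galois conjugates of $\lambda$, producing a recurrence of even order $2[\Q(\lambda):\Q]\geq 4$. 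In neither case is a genuine ternary rational recurrence obtained.

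It remains to analyze the first alternative of Theorem~\ref{Th:nondom}: $P(X)\mid(X^a-2X^b+1)X^{-\min\{a,b,0\}}$ for integers $a,b$. After normalizing to $a>b\geq 0$ (with $b=0$ yielding only $X^a-1$, forbidden by non-unitarity), $P$ is a cubic rational divisor of $X^a-2X^b+1$ for some $a>b>0$. I would then invoke Schinzel's factorization of $X^a-2X^b+1$ \cite{Schinzel:1962}, which describes all non-cyclotomic irreducible factors in a very constrained form. Discarding cyclotomic factors (excluded by non-unitarity) and any factor containing two roots whose ratio is a root of unity (excluded by non-degeneracy), the finitely many cubic possibilities that remain are precisely those tabulated in Table~\ref{Tab:Ter}.

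The main difficulty lies in this last enumeration. One has to handle both the irreducible cubic case and the reducible decomposition of $P$ into a linear factor times an irreducible quadratic (three rational linear factors cannot occur: by the rational root theorem the only rational roots of $X^a-2X^b+1$ lie in $\{\pm 1\}$, which are roots of unity and hence ruled out by non-unitarity). In each sub-case one must keep track of the $X$-adic normalization used to absorb $X^{-\min\{a,b,0\}}$, and bound the admissible pairs $(a,b)$ via Schinzel's description so that the classification reduces to a finite check yielding exactly the entries of Table~\ref{Tab:Ter}.
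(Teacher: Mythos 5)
Your overall strategy --- apply Theorem \ref{Th:nondom}, kill the symmetric alternative by the parity of the number of characteristic roots and the exceptional alternative because its minimal rational recurrence has order $2$ or an even order $\geq 4$, then enumerate cubic divisors of the surviving trinomial --- matches the paper's proof up to the last step, where there is a genuine gap. You normalize the first alternative of Theorem \ref{Th:nondom} to ``$P$ divides $X^a-2X^b+1$ with $a>b>0$'', but the factor $X^{-\min\{a,b,0\}}$ with $a,b$ ranging over all integers is not cosmetic: when $b<0<a$ the normalized trinomial is $X^{a-b}+X^{-b}-2$, and when $a<0<b$ it is $\pm\bigl(2X^{b-a}-X^{-a}-1\bigr)$. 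These three shapes record which of $f_m,f_n,f_k$ carries the coefficient $-2$ in $f_m-2f_n+f_k=0$ once the indices are put in increasing order, and they cannot be converted into one another by reciprocation or shifting. An enumeration that only runs Schinzel's factorization of $X^a-2X^b+1$ (Lemma \ref{Lem:Schinzel}) therefore recovers just six of the ten entries of Table \ref{Tab:Ter}; the entries $X^3+X^2+X+2$, $X^3+2X^2+2X+2$, $2X^3+2X^2+2X+1$ and $2X^3+X^2+X+1$ arise as non-cyclotomic factors of $X^4+X-2$, $X^4+X^3-2$ and of their reciprocals $2X^4-X^3-1$, $2X^4-X-1$, and are invisible to your argument.

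Moreover, handling $X^a+X^b-2$ is not a routine citation: Schinzel's published irreducibility results for this trinomial carry constants far too large to check (of size $(10^8!)^4$), which is why the paper proves its own Lemma \ref{Lem:irr}, stating that $(X^n+X^m-2)/(X^{\gcd(n,m)}-1)$ is irreducible for all $n>m>0$. That lemma is what bounds $a\leq 6$ in this second family, yields the four missing polynomials, and disposes of $X^3-2$ (a degree-$3$ non-cyclotomic factor of $X^6+X^3-2$ that gives only degenerate recurrences). Your remaining observations --- cyclotomic factors are excluded by non-unitarity, a non-cyclotomic rational linear factor cannot occur, and the reducible cubic case must be addressed --- are correct but secondary; the missing ideas are the case split into the three trinomials and the irreducibility statement for $X^n+X^m-2$.
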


\begin{table}[ht]
\caption{Companion polynomials of ternary recursion containing
arithmetic progressions with $f_{n+a},f_{n+b}$ and $f_n$
involved.}\label{Tab:Ter}
\begin{tabular}{|c|c|c|}
\hline
$a$ & $b$ & $P(X)$ \\\hline\hline
\multirow{3}*{$4$} & \multirow{3}*{$1$} & $X^3+X^2+X-1$ \\ \cline{3-3}
 & & $X^3+X^2+X+2$ \\ \cline{3-3}
 & & $2X^3+2X^2+2X+1$ \\ \hline
\multirow{3}*{$4$} & \multirow{3}*{$3$} & $X^3-X^2-X-1$ \\ \cline{3-3}
 & & $X^3+2X^2+2X+2$ \\ \cline{3-3}
 & & $2X^3+X^2+X+1$  \\ \hline
\multirow{2}*{$7$} & \multirow{2}*{$2$} & $X^3+X^2+1$ \\ \cline{3-3}
& & $X^3-X-1$\\ \hline
\multirow{2}*{$7$} & \multirow{2}*{$5$} & $X^3+X^2-1$\\ \cline{3-3}
& & $X^3+X-1$ \\ \hline
\end{tabular}
\end{table}

\section{Notation and Linear equations in recurrences}

We start this section with some notions. In the sequel we assume
that $f_n$ is a non-degenerate and non-unitary linear recurrence
sequence with companion polynomial~$P$. Let
$\alpha_1,\ldots,\alpha_r$ be the zeros of $P$. We call $f_n$
symmetric if $r$ is even and the zeros $\alpha_1,\ldots,\alpha_r$
can be arranged such that $(\alpha_i\alpha_{i+1})^M=1$ for each
odd $1\leq i<r$. We call $f_n$ exceptional if there exists an
integer $N>0$ such that each $\alpha_i$ is a rational power of
$N$, each $|\alpha_i|>1$ or each  $|\alpha_i|<1$ and
$p_i(n)=\gamma_i(n-\gamma)$ with $\gamma\in\Q$. Note that a
recurrence cannot be both symmetric and exceptional. We are
interested in the two equations
\begin{equation}\label{Eq:twoterm}
Af_n=Bf_m
\end{equation}
and
\begin{equation}\label{Eq:threeterm}
Af_n+Bf_m+Cf_k=0, \quad f_nf_mf_k\not=0
\end{equation}
where $ABC\not=0$. These equations were investigated by Laurent \cite{Laurent:1987} and Schlickewei and Schmidt \cite{Schlickewei:1993}, respectively. The next three Propositions are
reformulations of \cite[Proposition 1 and 2, Theorem 1 and 2]{Schlickewei:1993}.

Let us consider the case where $f_n$ is neither symmetric nor exceptional. Then we have

\begin{proposition}\label{Prop:nor}
Let $A,B,C$ be non-zero constants and let $f_n$ be neither symmetric nor exceptional. Then all solutions to \eqref{Eq:twoterm} but finitely many are contained in the one parameter family $n=t+a$ and $m=t+b$ for certain $a,b\in \Z$. Moreover all but finitely many solutions satisfy
\begin{equation}\label{Eq:Nor2}
Ap_i(n)\alpha_i^n=Bp_i(m)\alpha_i^m.
\end{equation}
All solutions to the ternary equation \eqref{Eq:threeterm} but finitely many are contained in one of finitely many families of the form
\begin{equation}\label{Fam:Nor3}
\mathcal F_j :\quad n=k+a_j, \;\; m=k+b_j, \quad a_j,b_j\in \Z
\end{equation}
and satisfy the polynomial identity
\begin{equation}\label{Eq:Nor3}
Ap_i(n)\alpha_i^n+Bp_i(m)\alpha_i^m+Cp_i(k)\alpha_i^k=0.
\end{equation}
\end{proposition}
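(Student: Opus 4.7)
The plan is to observe that Proposition \ref{Prop:nor} is essentially a restatement, in the current notation, of Theorems 1 and 2 of Schlickewei--Schmidt \cite{Schlickewei:1993}. The actual work to be done is (i) rewriting \eqref{Eq:twoterm} and \eqref{Eq:threeterm} in the form considered there, (ii) invoking their structure statements, and (iii) matching their lists of exceptional families against the definitions of \emph{symmetric} and \emph{exceptional} given above.

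For step (i), substitute $f_n = \sum_{i=1}^r p_i(n) \alpha_i^n$ into \eqref{Eq:twoterm} to obtain the generalized power-sum identity
$$\sum_{i=1}^r \bigl(A p_i(n) \alpha_i^n - B p_i(m) \alpha_i^m\bigr) = 0,$$
with characteristic roots drawn from $\{\alpha_1,\dots,\alpha_r\}$; the same substitution turns \eqref{Eq:threeterm} into an analogous $3r$-term relation. Non-degeneracy of $f_n$ guarantees that no additional multiplicative relations among the $\alpha_i$ are introduced when we split the single index $n$ into two or three independent indices.

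For step (ii), apply the Schlickewei--Schmidt structure theorem, whose engine is the quantitative Subspace Theorem. Its conclusion, phrased in the present setting, is that apart from finitely many sporadic solutions and apart from the exceptional parametric families (those corresponding to the symmetric and exceptional cases), every solution is forced by a complete pairing of corresponding terms: $A p_i(n) \alpha_i^n = B p_i(m) \alpha_i^m$ for each $i$ in the two-term case, which is \eqref{Eq:Nor2}, and the analogous polynomial identity \eqref{Eq:Nor3} in the three-term case. Comparing the growth in such a pairing for a dominant $\alpha_i$ forces $n - m$ to be bounded, hence constant along any infinite subfamily, producing the one-parameter family $n = t+a$, $m = t+b$. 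In the three-term case the same comparison yields finitely many shift patterns \eqref{Fam:Nor3}.

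The only non-mechanical step is the dictionary between the notions \emph{symmetric} and \emph{exceptional} as defined in this paper and the parametric exceptional families that appear in \cite{Schlickewei:1993}. I expect this translation --- verifying that conditions like $(\alpha_i \alpha_{i+1})^M = 1$ and $p_i(n) = \gamma_i(n - \gamma)$ with each $\alpha_i$ a rational power of a common integer $N$ correspond exactly to the degenerate families flagged in their theorems --- to be the principal obstacle, but no genuinely new Diophantine input is required beyond the cited results.
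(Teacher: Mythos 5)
Your approach is exactly what the paper does: the paper gives no independent proof of Proposition~\ref{Prop:nor}, stating only that it (together with Propositions~\ref{Prop:sym} and~\ref{Prop:exc}) is a reformulation of \cite[Propositions 1 and 2, Theorems 1 and 2]{Schlickewei:1993}, which is precisely the citation-plus-translation argument you outline. The translation dictionary you flag as the main remaining task is likewise left implicit in the paper.
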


Now we consider the symmetric case. Assume we have arranged the roots as described above. In this case further solutions may occur:

\begin{proposition}\label{Prop:sym}
Let $A,B,C$ be non-zero constants and let $f_n$ be symmetric. Then the equation \eqref{Eq:twoterm} has the additional family of solutions $n=Mt+a'$ and $m=-Mt+b'$ for certain $a',b'\in\Z$. These solutions satisfy the system
\begin{equation}\label{Eq:Sym2}
\begin{split}
Ap_i(n)\alpha_i^n=&Bp_{i+1}(m)\alpha_{i+1}^m, \\
Ap_{i+1}(n)\alpha_{i+1}^n=&Bp_i(m)\alpha_i^m,
\end{split}
\end{equation}
for all odd $i$ with $1\leq i\leq r$.
Solutions to the ternary equation \eqref{Eq:threeterm} may lie in one of the additional families of solutions
$\mathcal S^{(n)}_j,\mathcal S^{(m)}_j$ or $\mathcal S^{(k)}_j$, where e.g.
\begin{equation}\label{Fam:Sym3}
\mathcal S^{(k)}_j: \quad n=Mt+a^{(k)}_j, \;\; m=Mt+b^{(k)}_j \;\; k=-Mt+c^{(k)}_j;
\end{equation}
where the $a$'s, $b$'s and $c$'s are integers. All additional solutions satisfy a corresponding
system of equations, e.g. for the family $\mathcal S^{(k)}_j$ we have
\begin{equation}\label{Eq:Sym3}
\begin{split}
Ap_i(n)\alpha_i^n+Bp_i(m)\alpha_i^m+Cp_{i+1}(k)\alpha_{i+1}^k&=0,\\
Ap_{i+1}(n)\alpha_{i+1}^n+Bp_{i+1}(m)\alpha_{i+1}^m+Cp_i(k)\alpha_i^k&=0,
\end{split}
\end{equation}
for all odd $i$ with $1\leq i\leq r$. The other equations are
obtained by permuting indices.
\end{proposition}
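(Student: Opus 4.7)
The plan is to derive the proposition directly from \cite[Propositions~1--2 and Theorems~1--2]{Schlickewei:1993}, the main work being to read off which additional one-parameter families of solutions the symmetry hypothesis permits. Writing $f_n=\sum_{i=1}^{r}p_i(n)\alpha_i^n$, equations \eqref{Eq:twoterm} and \eqref{Eq:threeterm} become vanishing sums of $2r$ and $3r$ quasi-monomial terms of the shape $\lambda(\nu)\alpha_i^{\nu}$ with $\nu\in\{n,m,k\}$. The quantitative subspace theorem, as applied in \cite{Schlickewei:1993}, then implies that outside a finite exceptional set every solution forces a non-trivial partition of these terms into minimal vanishing subsums, and each admissible partition contributes one one-parameter family of solutions.

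For the binary equation I would first isolate the diagonal partition, which pairs $Ap_i(n)\alpha_i^n$ with $Bp_i(m)\alpha_i^m$ for every $i$; this reproduces identity~\eqref{Eq:Nor2} and the family $n=t+a,\, m=t+b$ of Proposition~\ref{Prop:nor}. In the symmetric case, the element $\zeta_i:=\alpha_i\alpha_{i+1}$ satisfying $\zeta_i^M=1$ makes it possible to pair $Ap_i(n)\alpha_i^n$ with $Bp_{i+1}(m)\alpha_{i+1}^m$: substituting $\alpha_{i+1}=\zeta_i/\alpha_i$ turns the identity into $A p_i(n)\alpha_i^{n+m}=B p_{i+1}(m)\zeta_i^m$, which is a polynomial identity in a single integer parameter exactly when $n+m$ is constant and $m$ is constant modulo $M$. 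Pairing the twin $(i+1,i)$ analogously yields the full system~\eqref{Eq:Sym2} and the one-parameter family $n=Mt+a'$, $m=-Mt+b'$. Non-degeneracy, non-unitarity and minimality of~$M$ rule out cross-pairings $i\leftrightarrow j$ with $j\neq i+1$, since any such would force a new multiplicative relation among the $\alpha_\ell$.

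The ternary argument proceeds in the same spirit: the diagonal subsums recover \eqref{Fam:Nor3} and \eqref{Eq:Nor3}, while the symmetric crossing $i\leftrightarrow i+1$ can be applied separately to the $n$-, $m$-, or $k$-variable, producing the three classes $\mathcal S^{(n)}_j,\mathcal S^{(m)}_j,\mathcal S^{(k)}_j$ with defining systems as in \eqref{Eq:Sym3} and its two cyclic permutations; the constraint that $n+k$ (resp.\ $m+k$) be constant modulo $M$ again pins down the parametrization~\eqref{Fam:Sym3}. The main obstacle will be the combinatorial bookkeeping required to show that no \emph{mixed} partition, combining a symmetric crossing on one pair of terms with a different crossing on another, can sustain infinitely many solutions; as in the binary case this reduces to checking that any such further dependence would contradict non-degeneracy or the minimality of~$M$. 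Once this is verified, the proposition is just the translation of the Schlickewei--Schmidt results into the present symmetric/exceptional/neither trichotomy.
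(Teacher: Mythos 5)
The paper does not actually prove this proposition: it is stated, together with Propositions \ref{Prop:nor} and \ref{Prop:exc}, as a reformulation of \cite[Propositions 1 and 2, Theorems 1 and 2]{Schlickewei:1993}, and the entire burden of proof is placed on that citation. So there is no proof in the paper to compare yours against; what you have written is a sketch of how the Schlickewei--Schmidt machinery produces the symmetric families. Your structural observations are the right ones: the diagonal pairing of $Ap_i(n)\alpha_i^n$ with $Bp_i(m)\alpha_i^m$ recovers \eqref{Eq:Nor2}; the crossing $i\leftrightarrow i+1$ is available precisely because $\alpha_i\alpha_{i+1}=\zeta_i$ is a root of unity, and rewriting $Bp_{i+1}(m)\alpha_{i+1}^m=Bp_{i+1}(m)\zeta_i^m\alpha_i^{-m}$ does force $n+m$ constant and $m$ constant mod $M$, i.e.\ the family $n=Mt+a'$, $m=-Mt+b'$; and cross-pairings $i\leftrightarrow j$ with $j\neq i+1$ are excluded because $\alpha_i\alpha_{i+1}$ and $\alpha_i\alpha_j$ both being roots of unity would make $\alpha_{i+1}/\alpha_j$ a root of unity, contradicting non-degeneracy (note it is non-degeneracy, not minimality of $M$, that does this work).

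If, however, your sketch is meant as a proof rather than a gloss on the citation, it has two genuine gaps. First, the claim that outside a finite set every solution induces a partition of the $2r$ (resp.\ $3r$) terms into minimal vanishing subsums \emph{that hold identically along a one-parameter family} is exactly the hard content of \cite{Schlickewei:1993}: the subspace theorem a priori only confines solutions to finitely many proper subspaces, and upgrading this to the statement that each vanishing subsum is a polynomial--exponential identity in a parameter $t$ (rather than merely holding on an infinite sporadic set of $(n,m,k)$) requires the full inductive argument of Schlickewei--Schmidt, which you invoke but do not carry out. Your deduction that $n+m$ must be bounded also tacitly assumes $|\alpha_i|\neq 1$, which can fail for non-real roots. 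Second, you explicitly leave the exclusion of ``mixed'' partitions as an unverified obstacle; for the ternary equation this is precisely where the enumeration of the families $\mathcal S^{(n)}_j,\mathcal S^{(m)}_j,\mathcal S^{(k)}_j$ could break down, so it cannot be left open. If, like the authors, you are content to quote Schlickewei--Schmidt as a black box, the honest move is simply to verify that their hypotheses and conclusions translate into the present notation; the partition analysis is then redundant.
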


Finally, in the exceptional case we obtain

\begin{proposition}\label{Prop:exc}
Equation \eqref{Eq:twoterm} has no additional solutions in view of Proposition \ref{Prop:nor}. All but finitely many solutions to the ternary equation \eqref{Eq:threeterm} satisfy \eqref{Eq:Nor3}. But additional solutions may
lie in one of the finitely many exceptional families $\mathcal E^{(n)}_j,\mathcal E^{(m)}_j$ or $\mathcal E^{(k)}_j$, where e.g.
\begin{equation}\label{Fam:Exc3}
\mathcal E^{(n)}_j: \quad n=c_jN^s+\gamma, \;\; m=c_jN^s+as+b_j
\;\; k=c_jN^s+a's+b'_j.
\end{equation}
These additional solutions appear only if all $p_i(n)=\gamma_i(n-\gamma)$. Further $c_j\in\Q^*$ and $a,a',b_j,b_j'\in\Q$ are such that $(n(s),m(s),k(s))\in\Z^3$ for each $s\in\Z$, $s\geq 0$.
\end{proposition}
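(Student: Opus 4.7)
The plan is to derive Proposition~\ref{Prop:exc} as a direct specialisation of the Schlickewei--Schmidt classification \cite[Theorems~1 and~2]{Schlickewei:1993} to the exceptional subclass, paralleling the way Propositions~\ref{Prop:nor} and~\ref{Prop:sym} were obtained from the same source. The core input is the quantitative subspace theorem, from which one extracts a finite list of one-parameter families exhausting all but finitely many solutions of \eqref{Eq:twoterm} and \eqref{Eq:threeterm}.

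For the two-term equation $Af_n=Bf_m$ I would first invoke Laurent's theorem \cite{Laurent:1987}: every infinite family is of the shape $n=t+a$, $m=t+b$, and the equality $Ap_i(n)\alpha_i^n=Bp_i(m)\alpha_i^m$ is automatic from non-degeneracy together with the linear independence of the $\alpha_i^t$. To show that no further families appear in the exceptional case, I would argue that a second family would force an equality $p_i(n)\alpha_i^n=p_j(m)\alpha_j^m$ with $i\neq j$ along an infinite set; this either forces $\alpha_i/\alpha_j$ to be a root of unity (ruled out by non-degeneracy) or forces the $\alpha$'s to come in reciprocal pairs, as in the symmetric case. Since a recurrence cannot be both symmetric and exceptional --- the former needing roots on both sides of the unit circle, the latter confining all $|\alpha_i|$ to one side --- this is impossible.

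For the ternary equation \eqref{Eq:threeterm}, \cite[Theorem~2]{Schlickewei:1993} produces the generic shift families \eqref{Fam:Nor3} governed by the polynomial identity \eqref{Eq:Nor3}. To capture the additional exceptional families I would use the hypothesis that every $\alpha_i$ is a rational power of a common integer $N$, writing $\alpha_i=\eta_i N^{r_i}$ with $\eta_i$ a root of unity, and substitute the ansatz $n=cN^s+\gamma$, $m=cN^s+as+b$, $k=cN^s+a's+b'$. The dominant double-exponential $N^{r_i c N^s}$ then factors out of all three summands; the residual identity depends on $s$ only through $\eta_i^{cN^s+\cdot}$ and $N^{r_i(as+b)}$, and reduces to a finite polynomial identity in these quantities, which fixes $a,a',b_j,b_j'$ in terms of the coefficients $\gamma_i$. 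The condition $p_i(n)=\gamma_i(n-\gamma)$ is essential at this step: higher-degree $p_i$ or non-common shifts $\gamma$ would break the leading-order double-exponential cancellation and prevent the family from producing infinitely many solutions.

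The main obstacle is the converse direction: ruling out any other infinite parametric shape. This is precisely where the full strength of the quantitative subspace theorem in \cite{Schlickewei:1993} is indispensable. One embeds the set of solutions into an $S$-unit equation over a number field containing all $\alpha_i$, applies the subspace theorem to confine the solutions to finitely many proper linear subspaces, and then identifies each subspace as either a generic shift family of type \eqref{Fam:Nor3} or an exceptional family of the $cN^s+\gamma$-shape described above. The rationality and integrality constraints on $c_j,a,a',b_j,b_j'$ then follow simply by imposing $n(s),m(s),k(s)\in\Z$ for every integer $s\geq 0$, which is necessary for these values to be legitimate indices of the recurrence $f_n$.
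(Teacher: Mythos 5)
Your proposal takes essentially the same route as the paper: the paper gives no independent proof of Proposition~\ref{Prop:exc} at all, stating it (together with Propositions~\ref{Prop:nor} and~\ref{Prop:sym}) as a direct reformulation of \cite[Propositions 1 and 2, Theorems 1 and 2]{Schlickewei:1993}, which is precisely the specialisation you carry out. Your additional sketch of why the exceptional and symmetric cases are mutually exclusive and of how the $cN^s+\gamma$ ansatz makes the double exponential factor out is consistent with the definitions in the paper and with the computations the authors later perform in their exceptional-case subsection.
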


\section{Proof of Theorem \ref{Th:nondom}}

For the proof of Theorem \ref{Th:nondom} we have to consider
equation \eqref{Eq:threeterm} with one of $A,B,C$ is equal to $-2$
and the other coefficients are equal to $1$. The case where
$f_mf_nf_k=0$ has to be considered separately. We divide the proof
of Theorem \ref{Th:nondom} into the obvious three subcases, i.e.
$f_n$ is symmetric, exceptional or neither of them. Let us start
with the case where $f_n$ is neither symmetric nor exceptional.

\subsection{The general case}\label{sub:gen}
First we assume $f_mf_nf_k\not=0$. Then we may assume that all but finitely many solutions are of the form
$m=k+a,n=k+b$ with $a>b>0$ and they satisfy the equation
\[Ap_i(k+a)\alpha_i^a+Bp_i(k+b)\alpha_i^b+Cp_i(k)=0\]
for all $1\leq i\leq r$ (see Proposition \ref{Prop:nor}). Fix the index $i$ and write for simplicity $\alpha=\alpha_i$ and $p_i(k)=p(k)=A_d k^d+\cdots +A_0$. Considering the equation above for $k\rightarrow \infty$ we see that the equation has to be satisfied polynomial. Assume $d>0$. We compair the coefficients of $k^d$ in the equation and find
\begin{equation}\label{Eq:GCd}
AA_d\alpha^a+BA_d\alpha^b+CA_d=0
\end{equation}
and for $k^{d-1}$ we find
\begin{equation}\label{Eq:GCd-1}
A\alpha^a(A_d d a+A_{d-1})+B(A_d d b+A_{d-1})+CA_{d-1}=0
\end{equation}
Subtracting equation \eqref{Eq:GCd} from \eqref{Eq:GCd-1} and after some calculations we obtain the system
\begin{equation*}
\begin{split}
 A \alpha^a + B \alpha^b & =-C\\
 A a \alpha^a +B b\alpha^b & =0
\end{split}
\end{equation*}
Solving for $\alpha^a$ and $\alpha^b$ yields $\alpha^a=-bBC$ and $\alpha^b=aAC$. Assume $A=-2$ then we
have $\alpha^a=-b$ and $\alpha^b=-2a$. Taking the first relation to the $b$-th power and inserting the second we obtain $(-b)^b=(-2a)^a$. In the case of $B=-2$ or $C=-2$ we obtain $(2b)^b=a^a$ or $(2b)^b=(-2a)^a$, respectively. The last equation has obviously no integral solution with $a>b>0$. The other two cases have also no solution because of the next lemma.

\begin{lemma}
The equation $a^a=(2b)^b$ has no positive integral solution.
\end{lemma}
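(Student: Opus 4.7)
\emph{Proof plan.} My plan is to split the analysis by comparing $a$ with $b$, disposing of the outer ranges by elementary growth estimates and attacking the remaining inner range $b<a<2b$ by a coprime-exponent parametrization.

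For the outer cases the inequalities go in the wrong direction. If $a=b$ the equation collapses to $1=2^a$, which has no positive solution. If $a<b$ then, using $a\ge 1$ and $b\ge 2$, one has $a^a\le b^a<b^b\le(2b)^b$. If $a\ge 2b$ then $a>b$ and $a^a\ge(2b)^a>(2b)^b$ because $2b\ge 2$. In each subcase the two sides of the equation are strictly ordered, so no solution lies there. This reduces the problem to the range $b<a<2b$, in which in particular $a\ge 2$.

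Inside that range I would write $b/a=r/s$ in lowest terms, so $\gcd(r,s)=1$ and $0<r<s$. Raising $a^a=(2b)^b$ to the $s$-th power and using $bs=ar$ gives $a^{as}=(2b)^{ar}$, whence $a^s=(2b)^r$. A standard valuation argument, using $s\,v_p(a)=r\,v_p(2b)$ for every prime $p$ together with $\gcd(r,s)=1$, then produces a positive integer $c$ such that $a=c^r$ and $2b=c^s$. Substituting these expressions into $sb=ra$ yields $sc^s=2rc^r$, that is $sc^{s-r}=2r$. Since $a=c^r\ge 2$ forces $c\ge 2$ and $s\ge r+1$ gives $s-r\ge 1$, the left-hand side is at least $2(r+1)>2r$, the desired contradiction.

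The real obstacle, and the only point at which a naive argument fails, is the middle range $b<a<2b$, where bare growth comparison is too coarse. The rigidity that rescues the argument is the coprime-exponent identity $a^s=(2b)^r$, which forces $a$ and $2b$ to be powers of a common base $c$ and thereby collapses the problem to the single numerical inequality $sc^{s-r}>2r$.
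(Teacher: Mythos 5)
Your proof is correct. Every step checks out: the outer cases $a\le b$ and $a\ge 2b$ are disposed of by valid monotonicity estimates; the identity $a^s=(2b)^r$ follows from raising $a^a=(2b)^b$ to the $s$-th power, substituting $bs=ar$, and extracting an $a$-th root of positive reals; the coprimality of $r$ and $s$ does force $v_p(a)=r e_p$ and $v_p(2b)=s e_p$ for every prime $p$, hence $a=c^r$, $2b=c^s$; and $sc^{s-r}=2r$ with $c\ge 2$, $s\ge r+1$ is indeed impossible. Your route shares its opening move with the paper's proof (both reduce to $a>b$ and parametrize the ratio as a reduced fraction --- the paper writes $a=xb$ with $x=p/q$, which is the reciprocal of your $b/a=r/s$), but the two arguments then diverge. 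The paper works directly with the equation $b^{p-q}x^p=2^q$ and derives a contradiction purely from valuations: first at any odd prime dividing $p$ (forcing $p=2^k$), then at $2$ (forcing $k=0$ and hence $x\le 1$). You instead pass to the cleaner relation $a^s=(2b)^r$, invoke the standard common-base lemma for coprime exponents, and finish with an elementary size inequality rather than a valuation count; this is arguably more transparent, at the cost of an extra structural step. Two minor remarks: your preliminary split at $a\ge 2b$ is redundant, since the parametrization argument only needs $a>b$ and $a\ge 2$, both of which already follow once $a\le b$ is excluded; and in the range $b<a$ one in fact has $a\ge 3$, though $a\ge 2$ is all you use.
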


\begin{proof}
First, note that the equation implies $a>b$, i.e. $a=xb$ with $x>1$, $x=p/q$ and $p,q\in\Z$ with $\gcd(p,q)=1$.
Inserting for $a=xb$ the equation is equivalent to
\[b^{x-1}x^x=2\]
taking $q$-th powers we have
\[b^{p-q}x^p=2^q\]
a rational equation. Let $r\not=2$ be a prime dividing $p$. Computing the $r$-adic valuations on the left and right hand side we obtain
\[\beta_r(p-q)+p\alpha_r=0,\]
where $\beta_r$ and $\alpha_r$ are the $r$-adic valuations of $b$ and $p$. Since $\alpha_r>0$, $\beta_r\geq 0$ and $p>q$ we have a contradiction. Therefore $p=2^k$ and we consider $2$-adic valuations:
\[\beta_2(p-q)+pk=q.\]
Since $p>q$ we obtain again a contradiction unless $k=0$. Hence $x=1/q\leq 1$ again a contradiction, i.e. the equation has no solution.
\end{proof}

Therefore all $p_i$ are constant and all $\alpha$ have to satisfy either of the equations
\[-2X^a+X^b+1, \quad X^a-2X^b+1, \quad X^a+X^b-2,\]
with $a>b>0$.

Now we consider the case $f_mf_nf_k=0$. Since we excluded the case $(f_m,0,f_k)$ we are lead to the equation $2f_n=f_k$. By Proposition \ref{Prop:nor} we have
\begin{equation}\label{Eq:norzero}
2p_i(k+a)\alpha_i^{k+a}=p_i(k)\alpha_i^k.
\end{equation}
Dividing through $\alpha_i^k$ and then taking the limit $k\rightarrow \infty$ we obtain
$\alpha_i^a=1/2$ for all $1\leq i \leq r$. Since the recurrence $f_n$ is non-degenerate we find
$r=1$ and $\alpha_1=\alpha=2^{-1/a}\zeta_a$ where $\zeta_a$ is some $a$-th root of unity. If we insert this into
\eqref{Eq:norzero} we obtain $p(k+a)=p(k)$ which on the other hand tells us $p(k)$ is constant. Therefore
$f_n=c2^{-n/a}$. On the other hand we have $f_m=0$ hence $c=0$. Therefore $f_n$ is a constant recurrence which we excluded.

\subsection{The symmetric case}
Now, let us treat the symmetric case. Let us write $\alpha_i\alpha_{i+1}=\zeta_i$ with $\zeta_i^M=1$ and as the first case let us assume $f_mf_nf_k\not=0$. In this case all solutions but finitely many lie in one of the families $\mathcal F_j$ or in $\mathcal S^{(m)}_j, \mathcal S^{(n)}_j$ or $\mathcal S^{(k)}_j$. The case where the solution lies in $F_j$ is identical with the case treated in the subsection above. Therefore we may assume $m=Mt+a, n=Mt+b$ and $k=-Mt+c$. According to
Proposition \ref{Prop:sym} we have to distinguish three cases. For each odd $i$ we have
\begin{equation}\label{Eq:symS1}
\begin{split}
Ap_i(Mt+a)\alpha_i^{Mt+a}+Bp_i(Mt+b)\alpha_i^{Mt+b}+Cp_{i+1}(-Mt+c)\alpha_{i+1}^{-Mt+c}&=0;\\
Ap_{i+1}(Mt+a)\alpha_{i+1}^{Mt+a}+Bp_{i+1}(Mt+b)\alpha_{i+1}^{Mt+b}+Cp_{i}(-Mt+c)\alpha_{i}^{-Mt+c}&=0
\end{split}
\end{equation}
or an equation which is obtained from \eqref{Eq:symS1} by permuting $A,B$ and $C$.

Let us multiply the first equation by $\alpha_i^{-Mt}$ and the second by $\alpha_{i+1}^{-Mt}$. Then we get
\begin{equation*}
\begin{split}
Ap_i(Mt+a)\alpha_i^a+Bp_i(Mt+b)\alpha_i^b+Cp_{i+1}(-Mt+c)\alpha_{i+1}^c&=0;\\
Ap_{i+1}(Mt+a)\alpha_{i+1}^a+Bp_{i+1}(Mt+b)\alpha_{i+1}^b+Cp_{i}(-Mt+c)\alpha_{i}^c&=0.
\end{split}
\end{equation*}
We note that the polynomials $p_i$ and $p_{i+1}$ are of the same degree, because otherwise either the first equation
or the second would yield a contradiction if we divide through $p_i$ and $t$ tends to infinity.
Now let us assume that $p_i$ and $p_{i+1}$ are of degree $d\geq 1$ and assume
\begin{gather*}
p_i(t)=A_d^{(i)}t^d+A_{d-1}^{(i)}t^{d-1}+\cdots;\\
p_{i+1}(t)=A_d^{(i+1)}t^d+A_{d-1}^{(i+1)}t^{d-1}+;\cdots
\end{gather*}
and write $q_i=A_d^{(i+1)}/A_d^{(i)}$. Comparing coefficients of
$t^d$ and $t^{d-1}$ yields the system of equations
\begin{gather*}
AA_d^{(i)}M^d\alpha_i^a+BA_d^{(i)}M^d\alpha_i^b+CA_d^{(i+1)}M^d\alpha_{i+1}^c=0;\\
AA_d^{(i+1)}M^d\alpha_{i+1}^a+BA_d^{(i+1)}M^d\alpha_{i+1}^b+CA_d^{(i)}M^d\alpha_{i}^c=0;\\
\begin{split}
M^{d-1}A\alpha_i^a(A_d^{(i)}ad+A_{d-1}^{(i)})+M^{d-1}B\alpha_i^b(A_d^{(i)}bd+A_{d-1}^{(i)})\\
+M^{d-1}C\alpha_{i+1}^c(A_d^{(i+1)}cd+A_{d-1}^{(i+1)})&=0;
\end{split}\\
\begin{split}
M^{d-1}A\alpha_{i+1}^a(A_d^{(i+1)}ad+A_{d-1}^{(i+1)})+M^{d-1}B\alpha_{i+1}^b(A_d^{(i+1)}bd+A_{d-1}^{(i+1)})\\
+M^{d-1}C\alpha_{i}^c(A_d^{(i)}cd+A_{d-1}^{(i)})=0;
\end{split}
\end{gather*}
and by straightforward calculations we obtain
\begin{equation}\label{Eq:symd>0}
\begin{split}
A\alpha_i^a+B\alpha_i^b+Cq_i\alpha_{i+1}^c&=0;\\
A\alpha_{i+1}^a+B\alpha_{i+1}^b+\frac C{q_i}\alpha_i^c&=0;\\
Aa\alpha_i^a+Bb\alpha_i^b+Ccq_i\alpha_{i+1}^c&=0;\\
Aa\alpha_{i+1}^a+Bb\alpha_{i+1}^b+\frac C{q_i}c\alpha_i^c&=0.
\end{split}
\end{equation}
Computing from the first equation $Cq_i\alpha_{i+1}^c$ and inserting into the third equation we get
\[A(a-c)\alpha_i^a+B(b-c)\alpha_i^b=0.\]
The second and forth equation lead to the same relation for $\alpha_{i+1}$. Therefore either $\alpha_i=\alpha_{i+1}$
or the sequence is degenerate or $a=c$ and $b=c$, hence in any case we obtain a contradiction. Therefore we have $d=0$.

Let us investigate the first two equations of \eqref{Eq:symd>0}. By using the fact that $\alpha_{i+1}^n=\alpha_i^{-n}\zeta_i^n$ for all integers $n$ the second equation can be rewritten as
\[A\alpha_{i}^{-a}\zeta_i^a+B\alpha_{i}^{-b}\zeta_i^b=-\frac {C\zeta_i^c}{q_i\alpha_{i+1}^c}.\]
Since the right side is obviously not zero we also have $A\alpha_{i}^{-a}\zeta_i^a+B\alpha_{i}^{-b}\zeta_i^b\not=0$ and therefore we can write
\[-\frac{C\zeta_i^c}{A\alpha_{i}^{-a}\zeta_i^a+B\alpha_{i}^{-b}\zeta_i^b}=q_i\alpha_{i+1}^c.\]
Inserting into the first equation yields
\[A\alpha_i^a+B\alpha_i^b-\frac{C^2\zeta_i^c}{A\alpha_{i}^{-a}\zeta_i^a+B\alpha_{i}^{-b}\zeta_i^b}=0\]
or in expanded form
\begin{equation*}
(A^2\zeta_i^a+B^2\zeta_i^b-C^2\zeta_i^c)+AB\zeta_i^a\alpha_i^{b-a}+AB\zeta_i^b\alpha_i^{a-b}=0.
\end{equation*}
Note that $\alpha_{i+1}$ satisfies the same equation. Let us assume $a>b$, then $\alpha_i$ and $\alpha_{i+1}$
are of the form $\xi_i\gamma_i^{1/(a-b)}$ or $\xi_{i+1}\gamma_{i+1}^{1/(a-b)}$, where $\xi_i,\xi_{i+1}$ are roots of unity and $\gamma_i$ and $\gamma_{i+1}$ are roots of the polynomial
\begin{equation*}
X^2+\frac{A^2\zeta_i^a+B^2\zeta_i^b-C^2\zeta_i^c}{AB\zeta_i^b} X+\zeta_i^{a-b}.
\end{equation*}
Note that $\alpha_i$ and $\alpha_{i+1}$ cannot be both of the form
$\xi\gamma_i^{1/(a-b)}$ or $\xi\gamma_{i+1}^{1/(a-b)}$, with $\xi$
some root of unity, since otherwise the recurrence would be
degenerate. Inserting for $A,B$ and $C$ the values $1$ and $-2$
according to the cases that may occur we obtain the sequences
listed in Table \ref{Tab:Sym}.

Before we proceed with the case $f_nf_mf_k=0$ we want to demonstrate this case by an example: Let us choose $r=2$ and $\alpha_1=2+\sqrt{5}$, i.e. $\alpha_2=2-\sqrt{5}$. Moreover, we choose $a=2$ and $b=c=1$. Therefore all sequences of the form
\[f_n=c_0\left((2+\sqrt 5)^n-(2-\sqrt 5)^n\frac {47+21\sqrt 5}2 \right)\]
have infinitely many three-term arithmetic progressions. We choose $c_0=\frac{1+\frac{21\sqrt 5-47}2}{15}$ and insert for $n=0$ and $n=1$ and observe that $2+\sqrt 5$ is a root of $X^2-4X-1$. Then we see that the sequence $f_n$ comes from the recurrence
\[f_{n+2}=4f_{n+1}+f_n, \quad f_0=-3, \quad f_1=2.\]
Therefore $f_n$ is defined over the integers and contains
infinitely many three-term arithmetic progressions
$(f_{2n+2},f_{-2n+1},f_{2n+1})$.

Now let us consider the case $f_mf_nf_k=0$. As arguing in the previous case we have to deal with the equation $f_n=2f_k$. By Proposition \ref{Prop:sym} we either have
\[2p_i(n)\alpha_i^n=p_i(k)\alpha_i^k\]
for each $i$ or
\[2p_i(n)\alpha_i^n=p_{i+1}(k)\alpha_{i+1}^k;\qquad 2p_{i+1}(n)\alpha_{i+1}^n=p_i(k)\alpha_i^k\]
for each odd $i$. The first equation corresponds to the case
treated above. Therefore, we may assume $n=Mt+a$ and $k=-Mt+b$. We
multiply the first equation by $\alpha_{i+1}^{Mt}/p_i(Mt+a)$ and
the second by  $\alpha_i^{Mt}/p_{i+1}(Mt+a)$ and take the limit
for $t\rightarrow \infty$. Then we get
\begin{equation}\label{Eq:sym0case}
2\alpha_i^a=\alpha_{i+1}^b q_i, \qquad
2\alpha_{i+1}^a=\alpha_{i+1}^b 1/q_i,
\end{equation}
where $q_i=\lim_{t\rightarrow \infty} p_i(tM+a)/p_{i+1}(-Mt+b)$.
Now eliminating $q_i$ from the first equation of \eqref{Eq:sym0case} yields
\[4\alpha_i^{a-b}=\alpha_{i+1}^{b-a}\]
and multiplying by $\alpha_{i+1}^{a-b}$ yields
\[4\zeta^{a-b}=1\]
a contradiction.

Note that in the case of $f_m=-f_k$ we obtain by the same computation $\zeta^{a-b}=1$ which would yield further solutions. Therefore we have excluded these cases in Theorem \ref{Th:nondom}.

\subsection{The exceptional case}

First, we consider the case $f_mf_nf_k\not=0$. Then by Proposition \ref{Prop:exc} we may assume
$n=cN^s+\gamma, m=cN^s+as+b$ and $k=cN^s+a's+b'$, $p_i(n)=\gamma_i(n-\gamma)$, $N=\alpha_i^{q_i}$ for some rational number $q_i$ and \eqref{Eq:Nor3} is satisfied. In order to treat several cases at once we assume
\begin{equation*}
n=cN^s+as+b,\quad  m=cN^s+a's+b',\quad k=cN^s+a''s+b''
\end{equation*}
with $a\geq a' \geq a''$ and one of $a,a'$ and $a''$ is zero and the corresponding $b$ is equal to $\gamma$.
Then by Proposition \ref{Prop:exc} we know that for all $\alpha_i$, $i=1,\ldots,r$ we have
\begin{multline*}
A\gamma_i (c N^s+as+b-\gamma)\alpha_i^{cN^s+as+b}+B\gamma_i(c N^s+a's+b'-\gamma)\alpha_i^{cN^s+a's+b'}\\
+C\gamma_i(c N^s+a''s+b''-\gamma)\alpha_i^{cN^s+a's+b'}=0.
\end{multline*}
For reasons of notation let us drop the indices. Then the equation
above can be written as
\begin{equation}\label{Eq:excMain}
\begin{split}
&Ac\alpha^{(q+a)s+b}+Bc\alpha^{(q+a')s+b'}+Cc\alpha^{(q+a'')s+b''}\\
+&A(as+b-\gamma)\alpha^{as+b}+B(a's+b'-\gamma)\alpha^{a's+b'}\\
+&C(a''s+b''-\gamma)\alpha^{a''s+b''}=0.
\end{split}
\end{equation}
Let us assume we have $|\alpha|>1$ (for all $i$). Therefore we have $q>0$. So the maximal coefficient of $s$ in the exponents of \eqref{Eq:excMain} is $q+a$. By dividing by $\alpha^{(q+a)s}$ we see that $(q+a)s+b$ cannot be the only maximal exponent. Otherwise every other term than $Ac\alpha^b$ would converge to $0$ and hence $Ac=0$, a contradiction. So either $q+a=a$ or $q+a=q+a'$. The first case can be excluded since otherwise $q=0$ and hence $N=1$. Therefore we have $a=a'$. If a third exponent would be also maximal we would have again either $q=0$ or $a=a'=a''=0$. Now the second case would yield a situation as treated in subsection \ref{sub:gen}. Since the leading terms must cancel, we get
\[Ac\alpha^{(q+a)s+b}+Bc\alpha^{(q+a)s+b'}=0\]
and in particular
\[-\frac AB=\alpha^{b'-b}.\]
Since $\alpha$ is not a root of unity we must have $AB=-2$, hence
$C=1$. Moreover, since $m$ and $n$ must be both integers also
$b-b'$ is an integer, hence $\alpha^K=2$, with $K=|b-b'|\in \Z$
(remember $|\alpha|>1$). Of course $b$ and $b'$ depend on the
exceptional family e.g. $\mathcal E_j^{(k)}$ but not on the root
$\alpha_i$. So for all roots $\alpha_i$ we have the same $K$,
hence there exists only one $\alpha$ since otherwise our
recurrence would be degenerate. Moreover, observe that also
$A(b-b')>0$ holds. Using these facts we have
\[B\alpha^{a's+b'}=B\alpha^{as+b+(b'-b)}=-A\alpha^{as+b}\]
and together with \eqref{Eq:excMain} we get
\[c\alpha^{(q+a'')s+b''}+A(b-b')\alpha^{as+b}+C(a''s+b''-\gamma)\alpha^{a''s+b''}=0.\]
Now the highest exponent is either $as+b$ or $(q+a'')s+b''$, but in any case $a''s+b''$ is smaller (otherwise $q=0$).
Since a single maximum yields a contradiction we deduce similar as above
\[c\alpha^{b''}+A(b-b')\alpha^{b}=0,\]
which also implies
\[C(a''s+b''-\gamma)\alpha^{a''s+b''}=0,\]
i.e. $a''=0$ and $b''=\gamma$ and so the equation above turns into
\begin{equation}\label{Eq:excC1} c+A(b-b')\alpha^{b-\gamma}=0.\end{equation}
From the equation above we also deduce that $b-\gamma\in\Z$ since otherwise $\alpha^{b-\gamma}$ is irrational and so also the left side of the equation, a contradiction. Note that in the case $|\alpha|<1$ and by assuming $a''\geq a' \geq a$ we obtain the same conclusions, except $K=-|b-b'|$ and $A(b-b')<0$. Let us now assume $n,m,k>0$ then we have $c>0$ and additionally let us assume $|\alpha|>1$ then we also have $K\geq 1\in \Z$ and by \eqref{Eq:excC1} we deduce $\alpha\not \in \R^+$ but $\alpha^{b-\gamma}\in\Q^-$ because of \eqref{Eq:excC1}. But the rational power of a negative rational never can be $2$, hence a contradiction. This shows that $c$ and $K$ cannot be both positive.

Now we want to prove that $a,a',b,b'$ and $c$ are integral. From the paragraph above we may assume $N=2$.
Since for all $s\in\Z$ with $s>0$ the quantities
\[n(s)=c2^s+\gamma,\quad m(s)=c2^s+as+b, \quad k(s)=c2^s+as+b'\]
must be integers and since $2^s$ and $as$ are periodic modulo each prime $p>2$ with period dividing $p-1$ and $p$ respectively and since $2^s$ is not constant modulo $p$ we deduce that the denominator of $c$ is a power of $2$ and therefore $c2^s$ is for large $s$ an integer, which yields that $a,a',b,b'$ and $\gamma$ are integers (at least for large $s$ and hence for all $s$). Therefore also $c$ has to be an integer.

Now let us consider the case, where $f_n$, $f_m$ or $f_k$ vanishes. This leads to an equation of the
form \eqref{Eq:twoterm}, but Proposition \ref{Prop:exc} tells us that such an equation has no additional solutions.

\subsection{Proof of Corollary \ref{Cor:Int}}

The last subsection of this section is devoted to the proof of
Corollary \ref{Cor:Int}. First, we note that since we allow only
positive indices the symmetric case is excluded. The exceptional
case is also excluded since $\alpha$ with $\alpha^K=2$ has to be
an (algebraic integer), hence $|\alpha|>1$. But in this case we
have $K\geq 1$, hence by Theorem 1 we have $c<0$, contradicting
the fact $n,k,m>0$. So only the general case remains. But Lemma
\ref{Lem:irr} below will show that $2X^a-X^b-1=(X^d-1)g(X)$ with
$g(X)$ irreducible and $d=\gcd(a,b)$. Hence $P(X)=g(X)$ but has no
integral roots, hence $f_n\not\in\Z$ if $n$ is large. A simlar
argument applies to the case $X^a+X^b-2$, but here we conclude
$f_n\not\in\Z$ if $n$ tends to $-\infty$.

\section{The binary case}

\subsection{Exceptional case}
Since $f_n$ is defined over the rationals and $\alpha=2^K$ we have $K=\pm 1$, i.e. $f_n=R(n-\gamma)2^{Kn}$, where $R\in \Q^*$. Therefore we have $q=a=a'=K$ and without loss of generality we may assume $A=1$. Then we have $b-b'=K$ and $c+A(b-b')\alpha^{b-\gamma}=c+K2^{K(b-\gamma)}=0$. Moreover we have
\begin{align*}
m&=c2^s+as+b=-K2^{s+Kb-K\gamma}+Ks+b;\\
n&=c2^s+a's+b'=-K2^{s+Kb-K\gamma}+Ks+b-K;\\
k&=c2^s+\gamma=-K2^{s+Kb-K\gamma}+\gamma.
\end{align*}
In particular substituting $s$ for $s+Kb$ we see that $(f_m,f_n,f_k)$ is an three-term arithmetic progression if
\[m=-K2^{s-K\gamma}+Ks, \quad n=-K2^{s-K\gamma}+Ks-K, \quad k=-K2^{s-K\gamma}+\gamma.\]
For $s>K\gamma+\frac{1+K}2$ these are distinct integers. Substituting $1$ and $-1$ for $K$ we get the statement for the exceptional case.

\subsection{Symmetric case}
Let us now consider the symmetric case. We keep the notation of
the previous section. Since $f_n=c_1\alpha_1^n+c_2\alpha_2^n$ is
defined over the rationals we have $\alpha_1\alpha_2=\pm 1$. In
the case of $\alpha_1\alpha_2=1$ the first two equations of
\eqref{Eq:symd>0} yield $\alpha_1=\alpha_2=q=1$, which is
excluded. Therefore we have $\alpha_1\alpha_2=-1$.  The case
$C=-2$ yields polynomials of the form
\[X^2+4X-1, \quad X^2-4X-1,\quad X^4+6X^2+1,\quad X^4-2X^2+1\]
and in the case of $B=-2$ or $A=-2$ we obtain the polynomials
\[X^2-2X+1, \quad X^2+X-1, \quad X^2-X-1, \quad X^2+2X-1,\quad X^2-2X-1,\quad X^4-3X^2+1,\]
where $X$ is of the form $x^{a-b}$ or $x^{b-a}$ depending on the sign of $a-b$. Since $\alpha_1,\alpha_2$ have to be quadratic integers not roots of unity, the only possibilities are in the case of $C=-2$
\[\alpha_1=\pm 2\pm \sqrt{5}\;\; \text{and} \;\; \alpha_1=\frac{\pm 1\pm \sqrt{5}}2\]
and in the case of $B=-2$ or $A=-2$
\[\alpha_1=\pm 1\pm \sqrt{2}\;\; \text{and} \;\; \alpha_1=\frac{\pm 1\pm \sqrt{5}}2.\]
Note that except $\pm 2 \pm \sqrt{5}$ all of these are fundamental units and we have $\pm 2 \pm \sqrt{5}=\left(\frac{\pm 1\pm \sqrt{5}}2\right)^3$ choosing the signs aproperiately. In particular for all these integers we have to choose $a$ and $b$ such that $|a-b|=1$ or $|a-b|=3$. The last case may only occur for $\alpha_1=\frac{\pm 1\pm \sqrt{5}}2$ and $C=-2$.

Let us consider the case $C=-2$ in more detail. In this case we may assume without loss of generality $a>b$. We know that
\[q=\frac{\alpha_1^{a+c}+\alpha_1^{b+c}}2 (-1)^c\]
and therefore
\[f_0=C_0\left(1+\frac{\alpha_1^{a+c}+\alpha_1^{b+c}}2 (-1)^c\right)\]
and
\[f_1=C_0\left(\alpha_1+\frac{\alpha_1^{a+c-1}+\alpha_1^{b+c-1}}2 (-1)^{c+1}\right).\]
If $f_n$ is defined over the rationals then for a $C_0$ such that $f_0$ is rational also $f_1$ has to be rational.
If we choose $C_0=1+\frac{\alpha_2^{a+c}+\alpha_2^{b+c}}2 (-1)^c$ we certainly have $f_0\in\Q$ since this is the norm of $C_0$. Hence he have to consider $f_1$:
\begin{align*}
f_1=& \left(1+\frac{\alpha_2^{a+c}+\alpha_2^{b+c}}2 (-1)^c\right)\left(\alpha_1+\frac{\alpha_1^{a+c-1}+\alpha_1^{b+c-1}}2 (-1)^{c+1}\right)\\
=&\alpha_1+\stackrel{R}{\overbrace{\frac{\alpha_2^{a+c-1}+\alpha_2^{b+c-1}}2(-1)^{c+1}+\frac{\alpha_1^{a+c-1}+\alpha_1^{b+c-1}}2 (-1)^{c+1}}}\\
&-\stackrel{Q}{\overbrace{\frac{(\alpha_2^{a+c}+\alpha_2^{b+c})(\alpha_1^{a+c-1}+\alpha_1^{b+c-1})}4}}.
\end{align*}
We note that $R$ is a rational number since it is the trace of an algebraic number. The numerator of $Q$ turns into
\begin{multline*}
\alpha_2((-1)^{a+c-1}+(-1)^{b+c-1})+(-1)^{b+c}\alpha_1^{a-b-1}+(-1)^{b+c-1}\alpha_2^{a-b+1}=\\
(-1)^{b+c}(\alpha_1^{a-b-1}-\alpha_2^{a-b+1}).
\end{multline*}
In the case of $a-b=1$ we obtain
\[\alpha_1-\frac{(-1)^{b+c}(1-\alpha_2^2)}4\]
is rational. If we try all possibilities we see that this is possible if and only if
$\alpha_1=2\pm\sqrt{5}$ and $b+c\equiv 0 \mod 2$ or $\alpha_1=-2\pm\sqrt{5}$ and $b+c\equiv 1 \mod 2$.
In the case of $a-b=3$ we deduce that
\[\alpha_1-\frac{(-1)^{b+c}(\alpha_1^2-\alpha_2^4)}4\]
is rational. Note $a-b=3$ is only possible if $\alpha_1=\frac{\pm
1\pm\sqrt 5}2$. Therefore we see that
$\alpha_1=\frac{1\pm\sqrt{5}}2$ if $b+c\equiv 0 \mod 2$ and
$\alpha_1=\frac{-1\pm\sqrt{5}}2$ otherwise.

Now, let us consider the case $A=-2$. We may assume $a-b=1$ (note
$b-a=1$ yields the same computations and the same results as case
$B=-2$ and $a-b=1$ treated below) we have
\[q=(-1)^c(\alpha_1^{b+c}-2\alpha_1^{a+c})\]
and therefore we have
\[f_0=C_0\left(1+(-1)^c(\alpha_1^{b+c}-2\alpha_1^{a+c})\right)\]
and
\[f_1=C_0\left(\alpha_1+(-1)^{c+1}(\alpha_1^{b+c-1}-2\alpha_1^{a+c-1})\right).\]
If we choose $C_0=1+(-1)^c(\alpha_2^{b+c}-2\alpha_2^{a+c})$ then $f_1$ has to be a rational. Therefore let us compute
\begin{align*}
f_1=& \left(1+(\alpha_2^{b+c}-2\alpha_2^{a+c})(-1)^c\right)\left(\alpha_1+(\alpha_1^{b+c-1}-2\alpha_1^{b+c-1}) (-1)^{c+1}\right)\\
=&\alpha_1+\stackrel{R}{\overbrace{(\alpha_2^{b+c-1}+\alpha_2^{b+c-1})(-1)^{c+1}+(\alpha_1^{b+c-1}-2\alpha_1^{b+c-1}) (-1)^{c+1}}}\\
&-\stackrel{Q}{\overbrace{(\alpha_2^{b+c}-2\alpha_2^{a+c})(\alpha_1^{b+c-1}-2\alpha_1^{a+c-1})}}
\end{align*}
obviously $R$ is rational since it is the trace of an algebraic number. Let us consider $Q$ under the assumption $a-b=1$:
\begin{multline*}
(\alpha_2^{b+c}-2\alpha_2^{a+c})(\alpha_1^{b+c-1}-2\alpha_1^{a+c-1})\\=(-1)^{b+c-1}\alpha_2+4\alpha_2(-1)^{a+c+1}+
 -2(-1)^{b+c-1}\alpha_2^{a-b+1}-2(-1)^{a+c-1}\alpha_2^{b-a+1}\\
=(-1)^{b+c}(3\alpha_2+2\alpha_2^2-2).
\end{multline*}
Therefore
\[\alpha_1-(-1)^{b+c}(3\alpha_2+2\alpha_2^2-2)\]
has to be rational. Inserting the possibilities for $\alpha_1$ and
$\alpha_2$ we see that $\alpha_1=-1-\sqrt{2}$ if $b+c\equiv 0 \mod
2$ and $\alpha_1=-\frac{1+\sqrt{5}}2$ otherwise.

In the case $B=-2$ we obtain by a similar computation $\alpha_1=-1-\sqrt{2}$ if $b+c\equiv 1 \mod 2$ and $\alpha_1=-\frac{1+\sqrt{5}}2$ otherwise.

Inserting all possibilities we obtain exactly the sequences listed
in Table \ref{Tab:Sym}.

\subsection{One parameter family}
We may exclude the polynomial $2X^a-X^b-1$ from our considerations since the transformation $X\rightarrow 1/X$ yields the polynomial $-Y^a-Y^b+2=(-1)(Y^a+Y^b-2)$ with $Y=1/X$ and this equivalent to a transformation $f_n\rightarrow f_{-n}$. Now we have to consider which quadratic polynomials with no roots of unity in their set of roots divide $X^a-2X^b+1$ or $X^a+X^b-2$. The first polynomial was studied by Schinzel \cite{Schinzel:1962}:

\begin{lemma}[Schinzel]\label{Lem:Schinzel}
The polynomial
\[\frac{X^n-2X^m+1}{X^{\gcd(n,m)}-1}\]
is irreducible over $\Q$ for all $n>m>0$, except $n=7k$ and $m=5k$
or $m=2k$. Then the polynomial factors into
$$(X^{3k}+X^{2k}-1)(X^{3k}+X^k-1)$$ or
$$(X^{3k}+X^{2k}+1)(X^{3k}-X^k-1)$$.
\end{lemma}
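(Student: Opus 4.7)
The strategy is to split $X^n-2X^m+1$ into a cyclotomic and a non-cyclotomic part, reduce the irreducibility question to the primitive case $\gcd(n,m)=1$, and then invoke Schinzel's theory of trinomials. Concretely, I would first extract the cyclotomic content: every $d$-th root of unity $\zeta$ (with $d=\gcd(n,m)$) satisfies $\zeta^n=\zeta^m=1$ and so annihilates $X^n-2X^m+1$, hence $X^d-1\mid X^n-2X^m+1$. Conversely, if $\zeta$ is any root of unity with $\zeta^n-2\zeta^m+1=0$, then $|\zeta^n+1|=|2\zeta^m|=2$, which forces $\zeta^n=1$ and then $\zeta^m=1$, so $\zeta^d=1$. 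Therefore the quotient $Q(X):=(X^n-2X^m+1)/(X^d-1)$ carries no cyclotomic roots.

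Writing $n=dn_0$ and $m=dm_0$ with $\gcd(n_0,m_0)=1$, the substitution $Y=X^d$ gives $Q(X)=Q_0(X^d)$, where $Q_0(Y)=(Y^{n_0}-2Y^{m_0}+1)/(Y-1)$. The two exceptional families in the lemma correspond precisely to $(n_0,m_0)=(7,2)$ and $(n_0,m_0)=(7,5)$, with $d=k$; the claimed factorizations arise from the primitive factorizations of $Y^7-2Y^2+1$ and $Y^7-2Y^5+1$ via $Y\mapsto X^k$, which I would verify by direct multiplication.

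The main step is then to show that $Q_0$ is irreducible over $\mathbb{Q}$ for every primitive pair $(n_0,m_0)$ with $n_0>m_0>0$ outside those two cases. Following Schinzel \cite{Schinzel:1962}, I would suppose $Q_0=AB$ is a non-trivial factorization in $\mathbb{Z}[Y]$. Since $Q_0(0)=\pm 1$, every root of $Q_0$ is an algebraic unit and $A(0),B(0)\in\{\pm 1\}$. A $2$-adic Newton-polygon analysis (the only non-trivial $v_2$ among the coefficients comes from the middle term $-2Y^{m_0}$), combined with resultant computations between $A$, $B$ and the cyclotomic polynomials, pins down the possible shapes of $A$ and $B$. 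Each admissible shape translates into a diophantine relation on $(n_0,m_0)$, and ruling out all solutions except $(7,2)$ and $(7,5)$ finishes the argument.

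The hardest part is precisely this final diophantine elimination, which is the technical core of Schinzel's trinomial theorem. The cyclotomic reduction and the substitution $Y=X^d$ are essentially routine, but exhaustively ruling out every conceivable factorization shape, case by case, requires the delicate interplay between resultants, cyclotomic-field arithmetic, and height bounds on algebraic units that gives Schinzel's paper its depth.
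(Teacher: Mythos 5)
The paper offers no proof of this lemma: it is stated with the attribution ``[Schinzel]'' and used as a black box, the surrounding text pointing to \cite{Schinzel:1962} for the factorization of $X^a-2X^b+1$. So the honest benchmark for your proposal is Schinzel's actual argument, not anything in the paper. Your extraction of the cyclotomic content is correct and cleanly argued (the observation that $\zeta^n-2\zeta^m+1=0$ with $|\zeta|=1$ forces $|\zeta^n+1|=2$ and hence $\zeta^n=\zeta^m=1$ is exactly right, modulo the degenerate case $n=2m$ where the quotient is itself cyclotomic). The rest of the proposal, however, has two genuine gaps.

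First, the reduction to the primitive case is not ``essentially routine.'' The identity $Q(X)=Q_0(X^d)$ is trivial, but irreducibility of $Q_0(Y)$ over $\Q$ does \emph{not} imply irreducibility of $Q_0(X^d)$: for instance $Y^2+Y+1$ is irreducible while $X^4+X^2+1=(X^2+X+1)(X^2-X+1)$. Deciding when $f(X^d)$ remains irreducible is itself a substantial piece of Schinzel's theory (precisely the Ljunggren--Schinzel circle of ideas the paper invokes for Lemma \ref{Lem:irr}), and your plan silently assumes it. Second, the actual core of the lemma --- irreducibility of $(Y^{n_0}-2Y^{m_0}+1)/(Y-1)$ for primitive pairs other than $(7,2)$ and $(7,5)$ --- is only named, not argued; and the one concrete tool you do name, the $2$-adic Newton polygon, is vacuous here, since the leading and constant coefficients are units and the polygon is a single horizontal segment, yielding no factorization information. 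Finally, the verification ``by direct multiplication'' that you defer is not a formality: as printed in the statement, $(X^{3k}+X^{2k}-1)(X^{3k}+X^{k}-1)$ expands to $X^{6k}+X^{5k}+X^{4k}-X^{3k}-X^{2k}-X^{k}+1$, which is the cofactor of $X^k-1$ in $X^{7k}-2X^{4k}+2X^{k}-1$, not in either exceptional trinomial; the correct factorizations are $X^{7k}-2X^{2k}+1=(X^k-1)(X^{3k}+X^{2k}-1)(X^{3k}+X^{k}+1)$ and $X^{7k}-2X^{5k}+1=(X^k-1)(X^{3k}+X^{2k}+1)(X^{3k}-X^{k}-1)$. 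Carrying out the check you postponed would have caught this.
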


Therefore we either have $a=3$ which yields polynomials listed in
Table \ref{Tab:Bin} or $a-\gcd(a,b)=2$ and $a>3$. Since
$a-\gcd(a,b)\geq a/2$ if $a>b$ we deduce $a=4$ and $b=2$ but
$X^4-2X^2+1=(X^2-1)^2$ which yields only degenerate or unitary
sequences.

We consider now the case $X^a+X^b-2$. Schinzel \cite{Schinzel:1965} proves in particular that the polynomial $\frac{X^a+X^b-2}{X^{\gcd(a,b)}-1}$ is irreducible if $a/\gcd(a,b)<C$ where $C$ is an absolute computable constant.
However the constant is by far too large ($>(10^8 !)^4$) to prove the irreducibility of $X^a+X^b-2$. Also the bound in \cite{Schinzel:1969} is too large ($\sim 10^8$). But following ideas of Schinzel \cite{Schinzel:1962} and Ljungreen \cite{Ljunggren:1960} we can show

\begin{lemma}\label{Lem:irr}
The polynomial
\[\frac{X^n+X^m-2}{X^{\gcd(n,m)}-1}\]
 is irreducible over $\Q$ for all $n>m>0$.
\end{lemma}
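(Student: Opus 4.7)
The plan is to combine a location-of-roots argument with Gauss's lemma, in the spirit of the Ljunggren/Schinzel irreducibility proofs referenced just before the lemma. Setting $d=\gcd(n,m)$ and $f(X):=(X^n+X^m-2)/(X^d-1)$, I would first check that $f\in\Z[X]$: every $d$-th root of unity $\omega$ satisfies $\omega^n+\omega^m-2=1+1-2=0$, and since the derivative $nX^{n-1}+mX^{m-1}$ does not vanish at any root of unity, these roots are simple zeros of $X^n+X^m-2$, so $X^d-1$ divides $X^n+X^m-2$ exactly. Hence $f$ is monic of positive degree $n-d$, and a direct computation yields the crucial numerical fact $f(0)=(-2)/(-1)=2$.

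The heart of the argument, which I would carry out next, is to show that every complex root $\alpha$ of $f$ satisfies $|\alpha|>1$. By construction $\alpha$ satisfies $\alpha^n+\alpha^m=2$ but $\alpha^d\neq 1$. If one assumes $|\alpha|\leq 1$, then $|\alpha^n|,|\alpha^m|\leq 1$, and the relation $\alpha^n+\alpha^m=2$ forces equality throughout the chain $|\alpha^n+\alpha^m|\leq |\alpha^n|+|\alpha^m|\leq 2$. The equality case of the triangle inequality for two complex summands of modulus at most $1$ whose sum is the real number $2$ yields $\alpha^n=\alpha^m=1$, whence $\alpha^{\gcd(n,m)}=\alpha^d=1$, contradicting $\alpha^d\neq 1$.

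With this bound in hand, the irreducibility would follow in a single step. Assume a nontrivial factorization $f=f_1 f_2$ over $\Q$; since $f$ is monic and integral, Gauss's lemma provides such a factorization with $f_1,f_2\in\Z[X]$ both monic of positive degree. Then $|f_i(0)|$ equals the product of the moduli of the roots of $f_i$, all of which are roots of $f$ and hence exceed $1$ in modulus, giving $|f_i(0)|\geq 2$ for $i=1,2$. Consequently $|f(0)|=|f_1(0)f_2(0)|\geq 4$, contradicting $f(0)=2$. I expect no genuine technical obstacle in this plan; the contrast with $X^n-2X^m+1$, where Schinzel found exceptional factorizations at $(n,m)=(7k,5k)$ and $(7k,2k)$, is instructive: there the analogous root-location step is strictly weaker, whereas here the right-hand side $2$ realizes the extreme case of the triangle inequality and no exceptional parameter values arise.
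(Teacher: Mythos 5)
Your proof is correct and rests on the same two ingredients as the paper's: the location of the roots of $X^n+X^m-2$ relative to the unit circle (via the triangle inequality) played off against the constant term $\pm 2$. The only differences are organizational — you work directly with the quotient, obtain the strict bound $|\alpha|>1$ by analyzing the equality case of the triangle inequality (where the paper separately identifies the unit-circle roots as $\gcd(n,m)$-th roots of unity), and conclude via $|f_1(0)f_2(0)|\geq 4>2$ rather than via divisibility by $X^{\gcd(n,m)}-1$; this is a slightly tidier write-up of essentially the same argument.
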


In order not to interrupt the proof of Theorem \ref{Th:Bin} we
postpone the proof of the lemma to the next subsection.

By Lemma \ref{Lem:irr} we have $a=3$ which only yields polynomials listed in Table \ref{Tab:Bin}, or we have $a=4$ and $b=2$. But
\[X^4+X^2-2=(X-1)(X+1)(X^2-2)\]
and hence each factor yields a degenerate or unitary recurrence. Therefore we have proved Theorem \ref{Th:Bin} completely apart form Lemma \ref{Lem:irr}.

\subsection{Proof of Lemma \ref{Lem:irr}}
Suppose that $f(X)g(X)=X^n+X^m-2$. Therefore we have $f(0)g(0)=-2$
and without loss of generality we have $|f(0)|=1$. On the other
hand a root of $X^m+X^n-2$ cannot have a root $\alpha$ such that
$|\alpha|<1$ since otherwise $|\alpha^n+\alpha^m-2|\geq
2-|\alpha|^n-|\alpha|^m>0$. Therefore all roots of $X^m+X^n-2$
have absolute value at least $1$. Since the product of all roots
of $f(X)$ has absolute value $1$ no root $\alpha$ has absolute
value greater than $1$, since then another root $\alpha'$ must
have absolute value less than $1$, i.e. all roots $\alpha$ of
$f(X)$ satisfy $|\alpha|=1$. Therefore we can write $\alpha=e^{2
\pi i y}$ and since $\alpha$ is a root of $X^m+X^n-2$ considering
real parts we obtain $\cos (2\pi m y)=\cos (2\pi n y)=1$, hence
$\sin (2\pi ny)=\sin (2 \pi my)=0$. Therefore $2y$ is rational and
its denominator divides $n$ and $m$. Therefore $\alpha$ is an
$\gcd(n,m)$-th  root of unity, i.e. $f(X)|
X^{\gcd(n,m)}-1$.\hfill$\blacksquare$

\subsection{Proof of Corollaries \ref{Cor:Fib} and \ref{Cor:unitary}}

By Theorem \ref{Th:nondom} and Theorem \ref{Th:Bin} we know that
the companion polynomial must be one of the polynomials listed in
Table \ref{Tab:Bin}. The root $\alpha$ with maximal absolute value
of all polynomials listed in Table \ref{Th:Bin} except the
polynomial $X^2-X-1$ is either not real or satisfies $\alpha<1$.
Therefore sequences with such companion polynomial are not
increasing. Therefore a sequence satisfiying the conditions of
Corollary \ref{Cor:Fib} has companion polynomial $X^2-X-1$. Since
a binary recurrence is uniquely determined by its companion
polynomial and the values at $f_0$ and $f_1$ the first part of
Corollary \ref{Cor:Fib} is proved.

It is well known (Binet's formula) that the Fibonacci sequence is defined by the explicit formula
\[f_n=\frac 1{\sqrt 5} \left( \left(\frac {1+\sqrt{5}}2\right)^n-\left(\frac {1-\sqrt{5}}2\right)^n\right).\]
Since $f_n$ is increasing a non-trivial three-term arithmetic
progression with $f_m>f_n>f_k$ fulfills $m>n>k\geq 0$ and in
particular for $k>0$ we have
\begin{multline*}
\left(\frac{1+\sqrt 5}2\right)^m +\left(\frac{1+\sqrt 5}2\right)^k-2\left(\frac{1+\sqrt 5}2\right)^n=\\
\left(\frac{1-\sqrt 5}2\right)^m +\left(\frac{1-\sqrt 5}2\right)^k-2\left(\frac{1-\sqrt 5}2\right)^n
\end{multline*}
Dividing this equation by $\left(\frac{1+\sqrt 5}2\right)^k$ and
using some simple estimations we obtain
\begin{equation}\label{Eq:CorFib}
\left|2\left(\frac{1+\sqrt 5}2\right)^{n-k}-\left(\frac{1+\sqrt
5}2\right)^{m-k}-1\right|<4\left(\frac{3-\sqrt 5}2\right)^k.
\end{equation}
Let us put $m-k=a$ and $n-k=b$ in \eqref{Eq:CorFib}, then we obtain
\[\left|\left(\frac{1+\sqrt 5}2\right)^b\left(\left(\frac{1+\sqrt 5}2 \right)^{a-b}-2\right)+1\right|<4\left(\frac{3-\sqrt 5}2\right)^k.\]

First, let us consider the case $a-b=1$. We may exclude $b=2$ since this yields the known family of three-term arithmetic progressions.
With $a-b=1$ we obtain
\[\left|1-\left(\frac{1+\sqrt 5}2\right)^{b-2}\right|<4\left(\frac{3-\sqrt 5}2\right)^k.\]
For $b=1$ and $b=3$ we obtain $k\leq 2$ and for $b>3$ we have $k=0$ a contradiction. Therefore the only possibilities for the triple $(m,n,k)$ are:
\[(3,2,1), \;\; (4,3,2), \;\; (5,4,1), \; \text{and} \; (6,5,2).\]
The only new triple that indeed provides an arithmetic progression is $(4,3,2)$, but this triple is listed in the corollary. For $a-b=2$ we obtain the inequality
\[\left|1-\left(\frac{1+\sqrt 5}2\right)^{b-1}\right|<4\left(\frac{3-\sqrt 5}2\right)^k.\]
For $b=2$ we obtain $k\leq 2$ and for $b>2$ we have $k=0$, a contradiction. In the case $b=1$ the original equation turns into
\begin{multline*}
\left(\frac{1+\sqrt 5}2\right)^{k+3} +\left(\frac{1+\sqrt 5}2\right)^k-2\left(\frac{1+\sqrt 5}2\right)^{k+1}=\\
\left(\frac{1-\sqrt 5}2\right)^{k+3} +\left(\frac{1-\sqrt 5}2\right)^k-2\left(\frac{1-\sqrt 5}2\right)^{k+1}.
\end{multline*}
Dividing through $\left(\frac{1+\sqrt 5}2\right)^k$ and a simple estimation on the right side yields
\[2=\left(\frac{1+\sqrt 5}2\right)^3 +1-2\cdot\frac{1+\sqrt 5}2 < 4\left|\frac{1-\sqrt 5}2 \right|^{2k}.\]
Therefore we have $k=0$, also a contradiction. Therefore the triple $(m,n,k)$ is either $(5,3,1)$ or $(6,4,2)$. But both do not yield arithmetic progressions. In the case of $a-b\geq 3$ the left side of \eqref{Eq:CorFib} is at least $\frac{7+\sqrt 5}2>4$ which is larger than the right side,
hence this case does not occur.

Now we consider the case $k=0$. Then inequality \eqref{Eq:CorFib} turns into
\[\left|\left(\frac {1+\sqrt 5}2\right)^{m-n}-2\right|<3 \left(\frac{1-\sqrt 5}2\right)^{2n}.\]
If $m-n=1$ we obtain
\[\left|\frac{3-\sqrt 5}2\right|<3  \left(\frac{3-\sqrt 5}2\right)^{n},\]
i.e. we have $n=1,2$. For $m-n=2$ we have
\[\left|\frac{1-\sqrt 5}2\right|<3  \left(\frac{3-\sqrt 5}2\right)^{n},\]
i.e. we have $n=1$. For $m-n>2$ we have
\[\sqrt 5\leq \left|\left(\frac {1+\sqrt 5}2\right)^{m-n}-2\right|<3  \left(\frac{3-\sqrt 5}2\right)^{n},\]
i.e. $n=0$, a contradiction. Therefore we have for the triple $(m,n,k)$ only the possibilities
$(2,1,0),(3,2,0)$ or $(3,1,0)$ which all appear in the statement of the corollary or yield no arithmetic progression.

We have seen that an arithmetic three-term progression in the Fibonacci sequence comes from the infinite family or has highest index at most $4$. Therefore an arithmetic four-term progression might have highest index at most $5$. Writing down $f_n$ for $n=0, \ldots,5$ we see that the only four-term arithmetic progressions appearing are those written down in the corollary.

Now we turn to the proof of Corollary \ref{Cor:unitary}. A unitary
increasing binary recurrence defined over the rationals is of the
form $f_n=ca^n+d(\pm 1)^n$, where $1<a\in\Q$ and $c,d\in\Q$. In
the case of $+1$ also the sequence $a^n$ yields infinitely many
three-term arithmetic progression. But in this case we have
$a^m-2a^n+a^k=0$ or $a^x-2a^y+1=0$, with $x>y>0$. On the other
hand we claim $a^x-2a^y+1\neq 0$ if $a\neq \pm 1$, but $a=\pm 1$
yields $f_n$ degenerate or constant. For the proof of the claim
see at the end of this section. Therefore we assume the $-1$ case.
Note that the equation $2f_n=f_m+f_k$ with $m>n>k\geq 0$ turns
into
\begin{equation}\label{Eq:unitary} a^m-2a^n+a^k=-\frac dc ((-1)^m-2(-1)^n+(-1)^k)< C_0\end{equation}
where $C_0$ is an absolute constant. If $a>2$ the left side tends
to $\infty$ whenever $m\rightarrow \infty$. Therefore let us
consider the case $2>a>1$. If $a^k<C_0$ the inequality above turns
into $|a^m-2a^n|<2C_0$ and in the case of $a^k\geq C_0$ we divide
by $a^k$ and obtain $|a^{m-k}-2a^{n-k}|<1+C_0/a^k\leq 2$. In any
case there is a constant $C_1$ such that
\[|a^y-2a^x|<C_1,\]
where $y=m,m-k$ and $x=n,n-k$ depending on the size of $a^k$. By a
standard application of Baker's theory of linear forms in
logarithms (see e.g. \cite{Baker:1993}) the inequality has only
finitely many solutions $x$ and $y$. Note that $a$ and $2$ are
multiplicatively independent. For small $k$ we therefore deduce
that only finitely many solutions to \eqref{Eq:unitary} exist. We
claim that $a^x-2a^y+1\not=0$ for $2>a>1$ with $a\in\Q$. If this
is true, then since $m-k$ and $n-k$ obtain only finitely many
values we find a constant $C_2$ such that
\[C_0<a^m-2a^n+a^k=(a^{m-k}-2a^{n-k}+1)a^k<C_2a^k.\]
Hence $k$ takes only finitely many values, i.e. \eqref{Eq:unitary} has only finitely many solutions. Therefore we deduce $a=2$ and $f_n=c2^n+d(-1)^n$.
Since $f_0=0$ and $f_1=1$ we deduce $f_n=\frac{2^n-(-1)^n}3$
which fulfills all requirements of the corollary.

So we are left to prove the
claim $a^x-2a^y+1\not=0$ for $2>a>1$ with $a\in \Q$.
But by Lemma \ref{Lem:Schinzel} we know that the only linear factors of the
polynomial $X^n-2X^m+1$ may only have the linear factors $(X-1)$ and $(X+1)$.

\section{The ternary case}

To obtain infinite families in the symmetric case or in the exceptional case the degree of the companion polynomial has to be even an obvious contradiction. So only the case of one parameter families remains.

First, let us consider factors of degree $3$ of $X^a-2X^b+1$. Due to Schinzel's result (see Lemma \ref{Lem:Schinzel}) we may assume $a=4,5,6,7$. In case of $a=4$ we may choose $b=1$ or $b=3$ (for $b=2$ the non-cyclotomic factor would be of degree $\leq 2$).
The corresponding irreducible non-cyclotomic factors of degree $3$ are
\[X^3-X^2-X-1 \;\; \text{and} \;\; X^3+X^2+X-1.\]
If $a=5$ we have no factor of degree $3$. In the case of $a=6$ we may choose $b=3$ but then we have $X^6-2X^3+1=(X^3-1)^2$ which has only cyclotomic factors. In the case of $a=7$ we may choose $b=2$ or $b=5$ and the non-cyclotomic factors are for $b=2$
\[ X^3+X^2+1 \;\; \text{and} \;\; X^3-X-1\]
and for $b=5$
\[X^3+X^2-1 \;\; \text{and} \;\; X^3+X-1.\]
So all polynomials listed in Theorem \ref{Th:Ternary} were found and there are no further possibilities left.

Now we consider factors of degree $3$ of $X^a+X^b-2$. Due to Lemma \ref{Lem:irr} we know that $a=4,5,6$. Similar as above we may exclude $a=5$. In the case of $a=6$ the non-cyclotomic factor of degree $3$ is $X^3-2$ but this polynomial only yields degenerate recurrences. So we are left to the case $a=4$ and $b=1$ or $b=3$. Therefore we obtain the polynomials
\[X^3+X^2+X+2 \;\; \text{and} \;\; X^3+2X^2+2X+2.\]

By replacing $X$ by $1/X$ and expanding, the case $2X^a-X^b-1$ is equivalent to the case $X^a+X^b-2$. Therefore we obtain $2$ further polynomials namely
\[2X^3+X^2+X+1 \;\; \text{and} \;\; 2X^3+2X^2+2X+1.\]
\hfill $\blacksquare$

{\bf Acknowledgements.} The authors are grateful to Professor
Andrzej Schinzel for his helpful remarks, especially for the proof
of Lemma \ref{Lem:irr} which essentially simplifies our original
proof and to Professor Jan-Hendrik Evertse for reading this
manuscript and his comments.

\bibliographystyle{abbrv}
\bibliography{Fibon}

\def\cprime{$'$}
\begin{thebibliography}{10}

\bibitem{Baker:1993}
A.~Baker and G.~W{\"u}stholz.
\newblock Logarithmic forms and group varieties.
\newblock {\em J. Reine Angew. Math.}, 442:19--62, 1993.

\bibitem{Berczes:2008}
A.~B{\'e}rczes, L.~Hajdu, and A.~Peth\H{o}.
\newblock {Arithmetic progressions in the solution sets of norm form
  equations}.
\newblock preprint.

\bibitem{Bremner:2000}
A.~Bremner, J.~Silverman, and N.~Tzanakis.
\newblock Integral points in arithmetic progressions on $y^2=x(x^2-n^2)$.
\newblock {\em Journal of Number Theory}, 80(2):187--208, 2000.

\bibitem{Dujella:2008}
A.~Dujella, A.~Peth{\H{o}}, and P.~Tadi{\'c}.
\newblock On arithmetic progressions on {P}ellian equations.
\newblock {\em Acta Math. Hungar.}, 120(1-2):29--38, 2008.

\bibitem{Everest:RS}
G.~Everest, A.~van~der Poorten, I.~Shparlinski, and T.~Ward.
\newblock {\em Recurrence sequences}, volume 104 of {\em Mathematical Surveys
  and Monographs}.
\newblock American Mathematical Society, Providence, RI, 2003.

\bibitem{Hajdu:2007}
L.~Hajdu.
\newblock Arithmetic progressions in linear combinations of {$S$}-units.
\newblock {\em Period. Math. Hungar.}, 54(2):175--181, 2007.

\bibitem{Laurent:1987}
M.~Laurent.
\newblock \'{E}quations exponentielles polyn\^omes et suites r\'ecurrentes
  lin\'eaires.
\newblock {\em Ast\'erisque}, (147-148):121--139, 343--344, 1987.
\newblock Journ{\'e}es arithm{\'e}tiques de Besan{\c{c}}on (Besan{\c{c}}on,
  1985).

\bibitem{Lech:1953}
C.~Lech.
\newblock A note on recurring series.
\newblock {\em Ark. Mat.}, 2:417--421, 1953.

\bibitem{Ljunggren:1960}
W.~Ljunggren.
\newblock On the irreducibility of certain trinomials and quadrinomials.
\newblock {\em Math. Scand.}, 8:65--70, 1960.

\bibitem{Mahler:1935}
K.~Mahler.
\newblock {Eine arithmetische Eigenschaft der Taylor-koeffizienten rationaler
  Funktionen}.
\newblock {\em Akad. Wtensch. Amsterdam. Proc}, 38:50--60, 1935.

\bibitem{Mignotte:1975}
M.~Mignotte.
\newblock A note on linear recursive sequences.
\newblock {\em J. Austral. Math. Soc.}, 20(2):242--244, 1975.

\bibitem{Pethoe:2008}
A.~Peth{\H{o}} and V.~Ziegler.
\newblock Arithmetic progressions on {P}ell equations.
\newblock {\em J. Number Theory}, 128(6):1389--1409, 2008.

\bibitem{Schinzel:1962}
A.~Schinzel.
\newblock Solution d'un probl\`eme de {K}. {Z}arankiewicz sur les suites de
  puissances cons\'ecutives de nombres irrationnels.
\newblock {\em Colloq. Math.}, 9:291--296, 1962.

\bibitem{Schinzel:1965}
A.~Schinzel.
\newblock On the reducibility of polynomials and in particular of trinomials.
\newblock {\em Acta Arith.}, 11:1--34, 1965.

\bibitem{Schinzel:1969}
A.~Schinzel.
\newblock Reducibility of lacunary polynomials. {I}.
\newblock {\em Acta Arith.}, 16:123--159, 1969/1970.

\bibitem{Schinzel:Pol}
A.~Schinzel.
\newblock {\em Polynomials with special regard to reducibility}, volume~77 of
  {\em Encyclopedia of Mathematics and its Applications}.
\newblock Cambridge University Press, Cambridge, 2000.
\newblock With an appendix by Umberto Zannier.

\bibitem{Schlickewei:1993}
H.~P. Schlickewei and W.~M. Schmidt.
\newblock Linear equations in members of recurrence sequences.
\newblock {\em Ann. Scuola Norm. Sup. Pisa Cl. Sci. (4)}, 20(2):219--246, 1993.

\bibitem{Schlickewei:2000}
H.~P. Schlickewei and W.~M. Schmidt.
\newblock The number of solutions of polynomial-exponential equations.
\newblock {\em Compositio Math.}, 120(2):193--225, 2000.

\bibitem{Schmidt:1999}
W.~M. Schmidt.
\newblock The zero multiplicity of linear recurrence sequences.
\newblock {\em Acta Math.}, 182(2):243--282, 1999.

\bibitem{Schwartz:2009}
R.~Schwartz, J.~Solymosi, and F.~de~Zeeuw.
\newblock {Simultaneous Arithmetic Progressions on Algebraic Curves}.
\newblock preprint, available at \url{http://front.math.ucdavis.edu/0910.0904}.

\bibitem{Sierpinski:1954}
W.~Sierpi{\'n}ski.
\newblock Remarques sur les progressions arithm\'etiques.
\newblock {\em Colloquium Math.}, 3:44--49, 1954.

\bibitem{Skolem:1934}
T.~Skolem.
\newblock {Einige S\"atze \"uber $\mathfrak p$-adiche Potenzreihen mit
  Anwendung auf gewisse exponentielle Gleichungen.}
\newblock {\em C. r. 8 congr. scand. \'{a} Stockholm}, pages 163--188, 1934.

\end{thebibliography}
\end{document}